\newtheorem{theorem}{Theorem}[section]
\newtheorem{corollary}[theorem]{Corollary}
\newtheorem{lemma}[theorem]{Lemma}
\newtheorem{prop}[theorem]{Proposition}
\theoremstyle{definition}
\newtheorem{definition}[theorem]{Definition}
\newtheorem{example}[theorem]{Example}
\newtheorem{remark}{Remark}[section]
\numberwithin{equation}{section}
\newcommand{\real}{\mathbb{R}}
\newcommand{\rn}{\real^{n}}
\let\leq\leqslant
\let\geq\geqslant
\numberwithin{equation}{section}
\begin{document}

\title{On stability for generalized linear differential equations and applications to impulsive systems}
\author{Claudio A. Gallegos\thanks{ Supported by FONDECYT Postdoctorado No 3220147. Universidad de Chile, Departamento de Matem\'aticas. Casilla 653, Santiago, CHILE. E-mail: {\tt  claudio.gallegos.castro@gmail.com}}, Gonzalo Robledo V.\thanks{Partially supported by FONDECYT Regular No 1210733 . Universidad de Chile, Departamento de Matem\'aticas. Casilla 653, Santiago, CHILE. E-mail: {\tt grobledo@uchile.cl}.}
}

\date{}
\maketitle

\begin{abstract}
In this paper, we are interested in investigating notions of stability for generalized linear differential equations (GLDEs). Initially, we propose and revisit several definitions of stabi\-li\-ty and provide a complete characterisation of them in terms of upper bounds and asymptotic behaviour of the transition matrix. In addition, we illustrate our stability results for GLDEs to linear periodic systems and linear impulsive differential equations. Finally, we prove that the well known definitions of uniform asymptotic stability and variational asymptotic stability are equivalent to the global uniform exponential stability introduced in this article.

\smallskip
{\bf Keywords:} Generalized ordinary differential equations; Kurzweil integral; Stability; Asymptotic stability; Impulsive equations.

\textbf{MSC 2020 subject classification:}  Primary: 34A06, 34D20. Secondary: 34A30, 34A37.
\end{abstract}

\pagestyle{myheadings} \markboth{\hfil C. A. Gallegos, G. Robledo V. \hfil $\hspace{3cm}$ } {\hfil$\hspace{1.5cm}$
	{Stability for generalized linear differential equations}
	\hfil}

\section{Introduction}
The beginning of the generalized ordinary differential equations (GODEs) date back to 1957 when the seminal work of J. Kurzweil \cite{JK} introduced this new class of differential equations and gives the first steps in a thoroughgoing construction of a qualitative theory developed in a series of subsequent articles, see \cite{JK3,JK1,JK2}. Kurzweil's contribution was and has been until today an inspiration for several mathematicians who continue growing up this qualitative theory and establishing noteworthy connections between other types of differential systems, such as impulsive differential equations, differential equations in measure, dynamic equations on time scales and functional differential equations, see for instance \cite{FS,FMS,IV,OV,SCHWABIK1,S}. In the last years a lot of progress has been made in the characterisation of exponential dichotomies and its applications \cite{BFS,BFS2}, along with the development of a stability theory \cite{AFT,FGMT1,GGM}. This article will be focused on this last topic.

Let us to commence by introducing the notion of generalized equation above mentioned. Given a function $F: \Omega \to \rn$, where $\Omega=\mathcal{O}\times\real$, and $\mathcal{O}\subset\rn$ is an open set; a function $x:[a,b]\to \rn$, with $[a,b]\subset\real$, is called a {\it solution of the generalized ODE}
\begin{equation}\label{G1}
\dfrac{dx}{d\tau}=DF(x,t) 
\end{equation}
on the interval $[a,b]$, if $(x(t),t)\in\Omega$ for every $t\in[a,b]$, and
\begin{equation}\label{G2}
x(d)-x(c)=\int_{c}^{d}DF(x(\tau),t), \quad \textnormal{whenever $[c,d] \subseteq [a,b]$}.
\end{equation}
It is important to emphasize two aspects of this concept: at first, equation \eqref{G1} is defined via its solution. Secondly, the integral on the right-hand side of \eqref{G2} is understood in the \emph{Kurzweil} sense defined in \cite{KS}. A precise statement will be given in Section~2 to understand the relationship between $F$ and the formalism $DF$ on \eqref{G1} and \eqref{G2}.

In this paper, we focus our attention in the stability theory for GODEs \eqref{G1} described by functions $F\colon\Omega\to\rn$ defined by $F(x,t)=A(t)x+g(t)$, where $A:[0,+\infty)\to\mathcal{L}(\rn)$ and $g:[0,+\infty)\to\rn$ are functions of locally bounded variation with additional properties that will be specified later. This particular case of GODE is known as generalized linear differential equation (GLDE), and is symbolically denoted by 
\begin{equation}\label{G3}
\dfrac{dx}{d\tau}=D[A(t)x+g(t)].
\end{equation}

When the above function $g$ is considered identically null, equation \eqref{G3} is said \emph{homogeneous GLDE} and is denoted by 
\begin{equation}\label{G4}
\dfrac{dx}{d\tau}=D[A(t)x].
\end{equation}
This type of linear problem was intensively studied since 1971 by \v{S}. Schwabik, who developed --in collaboration with M. Tvrd\'y-- a qualitative linear theory in a saga of papers devoted to the existence and uniqueness theorems, the representation of the unique solution for the homogeneous GLDE \eqref{G4} with initial condition $x(s_0)=x_0\in\rn$ through a transition matrix $U(t,s)$, a variation of constants formula for GLDEs \eqref{G3}, among other interesting results,  see {\it e.g.} \cite{SchwF,SCHWABIK1,Schw1,ST,SCHWABIK3}. 

In 2019, Federson \emph{et al.} \cite{FGMT1} have been considering stability notions in the sense of Lyapunov for GODEs \eqref{G1} --see Definition~\ref{stability} in Section 3-- and also established results about uniform stability and uniform asymptotic stability by using Lyapunov functions, see \cite[Th.3.4]{FGMT1} and \cite[Th.3.6]{FGMT1}, respectively. Later, Gallegos \emph{et al.} in \cite{GGM}, established new results concerned to stability, asymptotic stability and exponential stability for GODEs \eqref{G1}, see \cite[Th.3.4]{GGM}, \cite[Th.3.6]{GGM} and \cite[Th.3.9]{GGM}, respectively. In the aforementioned articles the authors applied the above stability results to measure differential equations and dynamic equations on time scales. Recently, Andrade da Silva \emph{et al.} in \cite{AFT} proved a converse Lyapunov theorem on uniform stability, and a converse Lyapunov theorem on uniform boundedness for GODEs \eqref{G1}, see  \cite[Th.4.7]{AFT} and \cite[Th.5.2]{AFT}, respectively. In addi\-tion, this last article studies the relationship between uniform stability, uniform boundedness and stability with respect to perturbations.   

Motivated by the aforementioned articles of stability concerned to GODEs \eqref{G1} \cite{AFT,FGMT1,GGM}, the main purpose in this paper consists in to extend our knowledge about the stability notion introduced in \cite{FGMT1} by considering the special case of the homogeneous GLDE \eqref{G4}, with the goal to obtain sharper results in comparison with the nonlinear framework. More specifically, in Section 3 we provide our first results:
\begin{itemize}
\item In Theorem~\ref{US}, we characterise the uniform stability in terms of the boundedness of the transition matrix $U(t,s)$ associated to the homogeneous GLDE \eqref{G4}. 
\item In Theorem~\ref{UAS}, we characterise the uniform asymptotic stability with a uniform exponential decay of the transition matrix of the form 
\[
\|U(t,s)\|\leq Ke^{-\alpha(t-s)}, \qquad \text{for $K,\alpha>0$, and any $t\geq s\geq0$}.
\]
\item We introduce definitions of global asymptotic stability and global uniform exponential stability for the trivial solution of the homogeneous GLDE \eqref{G4} --see Definition~\ref{gstability}-- 
and we provide a characterisation of these both concepts of stability in terms of the transition matrix $U(t,s)$, see Theorem~\ref{GAS} and Theorem~\ref{UAS}. 
\item  By using the Floquet theory \cite{SchwF} for periodic systems \eqref{G4}, we provide a necessary and sufficient condition ensuring global uniform exponential stability for \eqref{G4}, see Corollary~\ref{Floq}. 
\end{itemize}

In Section 4, we recall --see \cite[Example 6.20]{SCHWABIK1}-- that a linear impulsive differential equation can be seen as a particular case of the GLDE \eqref{G4}, whose transition matrices are the same. This fact allow us to compare the stability notions and results developed in Section 3 with those established in the impulsive linear framework \cite{Bainov,Halanay,Samoilenko}. More specifically, we provide several scalar examples which illustrates the types of stability described in the previous section. See Examples \ref{exUS} to \ref{exGAS-sof}.

In the last section, we are concerned with the notion of \emph{variational  stability} introduced by \v{S}. Schwabik in \cite{SCHWABIK2}, where a well known result states that the variational stability is equivalent to the existence of a uniform bound for the transition matrix $U(t,s)$ and --by using the results from Section 3-- we point out that this is also equivalent to the uniform stability. These pre\-li\-mi\-nary results motivate the main result of this section, namely Theorem \ref{VAS}, which states that the variational asymptotic stability of the trivial solution for the homogeneous GLDE \eqref{G4} is equivalent to the uniform asymptotic stability studied in \cite{AFT,FGMT1,GGM} and revisited in Section 3 for the linear case.

\section{Preliminaries}

Throughout the text, $X$ will always denote a Banach space endowed with a norm $\|\cdot\|_{X}$, and the set $\mathcal{L}(\rn)$ denotes the vector space consisting of all $n\times n$ - matrices with real components endowed with the operator norm. In order to give a brief overview of the Kurzweil integral theory, we will introduce the following definitions:

\begin{itemize}
	\item A subset $P=\{\alpha_0,\alpha_1,...,\alpha_{\nu(P)}\}\subset [a,b]$, with $\nu(P)\in\mathbb{N}$, is said to be a \textit{partition} of $[a,b]$ if $\alpha_0=a<\alpha_1<...<\alpha_{\nu(P)}=b$. We denote the set which contains all partitions of $[a,b]$ by $\mathcal{P}([a,b])$.
	\item The pair  $(P,\tau):=\{([\alpha_{j-1},\alpha_j], \tau_j): j=1,...,\nu(P)\}$, where $P\in\mathcal{P}([a,b])$ and $\tau_j\in[\alpha_{j-1},\alpha_j]$, is called a \emph{tagged partition} of $[a,b]$.
	\item {Any positive function} $\delta:[a,b]\to\mathbb{R}^{+}$ is called a {\itshape gauge} on $[a,b]$.
	\item If $\delta$ is a gauge on $[a, b]$, a
	tagged partition $(P,\tau)$ is called  $\delta$-{\itshape fine} if
	\[
	[\alpha_{j-1},\alpha_j] \subset \left(\tau_j-\delta(\tau_j), \tau_j+\delta(\tau_j)\right), \; \; j=1, \ldots ,\nu(P).
	\]
	\item  Let $f:[a,b]\to X$ be a function. We denote the \textit{variation} of $f$ over the interval $[a,b]$ by
	\[
	\text{var}_{a}^{b}(f)=\sup_{P\in \mathcal{P}([a,b])}\sum_{j=1}^{\nu(P)}\|f(\alpha_{j})-f(\alpha_{j-1})\|_{X}.
	\]
\end{itemize}
The vector space consisting of all functions $f\colon [a,b]\to X$ for which $\text{var}_{a}^{b}(f)<\infty$ is denoted by $BV([a,b],X)$, and it is a Banach space for the norm
\[
\|f\|_{BV}=\|f(a)\|_{X}+\text{var}_{a}^{b}(f).
\]

We denote by $BV_{loc}([0,+\infty),X)$ the set consisting of all functions $f\colon[0,+\infty)\to X$ of locally bounded variation, \emph{i.e.} all functions $f$ such that $\text{var}_{a}^{b}(f)<\infty$ for every compact interval $[a,b]\subset [0,+\infty)$.

Now, we are in a position to define the Kurzweil integral introduced in \cite{JK}:

\begin{definition}\label{Kint}
 A function $G \colon [a,b]\times[a,b]\to X$ is called \emph{Kurzweil integrable} on $[a,b]$, if there is an element $ \mathcal{I} \in X$ having the following property:
for every $\varepsilon>0$, there is a gauge $\delta(\cdot)$ on $[a,b]$ such that
\[
\left\|\sum_{j=1}^{\nu(P)}[G(\tau_j,\alpha_j)-G(\tau_j,\alpha_{j-1})]- \mathcal{I} \right\|_{X}<\varepsilon,
\]
for all $\delta$--fine tagged partition $(P,\tau)$ of $[a,b]$. In this case, $\mathcal{I}$ is called the \emph{Kurzweil integral} of $G$ over $[a,b]$ and will be denoted by $\int_{a}^{b}DG(\tau,t)$.

If $\int_{a}^{b}DG(\tau,t)$ exists, then we define $\int_{b}^{a}DG(\tau,t)=-\int_{a}^{b}DG(\tau,t)$ and we set $\int_{c}^{c}DG(\tau,t)=0$ for all $c \in[a,b]$.
\end{definition} 

\begin{remark} \rm
	Definition~\ref{Kint} has sense due to the fact that given a gauge $\delta$ on $[a, b]$, there always exists a $\delta$-fine tagged partition $(P,\tau)$ of the
	interval $[a, b]$. See \cite[Cousin's lemma~1.4]{SCHWABIK1} or \cite[Lemma 3.1]{Briend}.
\end{remark}

The Kurzweil integral satisfies the usual properties of integration such as linearity, integrabi\-lity of subintervals, additivity on adjacent intervals, among others  (see Chapter I in \cite{SCHWABIK1}). Let us emphasize certain facts of the Kurzweil integra\-bi\-lity notion. The {\it Henstock-Kurzweil integral} definition --also called {\it gauge integral}-- of a function $x\colon[a,b]\to\real$ is obtained by considering in Definition~\ref{Kint} the function  $G(\tau,t)=x(\tau)t$ for all $(\tau,t)\in[a,b]\times[a,b]$. It is well known that this integral contains the classical ones of Riemann and Lebesgue,  see {\it e.g.} \cite{Briend,KS}. Moreover, by considering the function  $G(\tau,t)=x(\tau)g(t)$, where $g:[a,b]\to\real$, the Kurzweil integral turns out be equivalent to the Perron--Stieltjes integral of the function $x(\cdot)$ with respect to $g$, see \cite{JK}. 

In order to provide an appropriate notation for the subsequent sections, we present the follo\-wing example 

\begin{example}\label{ex0}
Consider functions $x\colon[a,b]\to\rn$ and $A:[a,b]\to\mathcal{L}(\rn)$. By defining $G(\tau,t)=A(t)x(\tau)$ in Definition~\ref{Kint}, we obtain the Kurzweil--Stieltjes integral of the function $x$ respect to the function $A$. This particular case is also called Perron--Stieltjes integral. Note that the integral can be approximated by a Stieltjes sum in the following sense
\[
\int_{a}^{b}DG(\tau,t)=\int_{a}^{b}D[A(t)x(\tau)]\sim \sum_{j=1}^{\nu(P)}[A(s_j)-A(s_{j-1})]x(\tau_j).
\] 
Therefore, we use the following conventional notation: 
\[
\int_{a}^{b}D[A(t)x(\tau)]=\int_{a}^{b}{\rm d}[A(s)]x(s).
\]
For a list of fundamental properties of the Perron--Stieltjes integral, we refer to \cite{MT2,MT1,SCHWABIK}. 
\end{example}

An ubiquitous Banach space in the functional setting of the Perron--Stieltjes integral is given by the set $G([a,b],X)$, consisting of all regulated functions $f:[a,b]\to X$, endowed with the uniform convergence norm $\displaystyle\|f\|_{\infty}=\sup_{t\in[a,b]}\|f(t)\|_{X}$. We point out that $BV([a,b],X)\subset G([a,b],X)$. For a detailed discussion about this space we refer the reader to \cite{Frankova}.  

We recall an existence result for the Perron--Stieltjes integral which will be crucial in the development of generalized linear differential equations.

\begin{theorem}\label{exist1}{\rm (\cite[Proposition~2.1]{MT2})}
	Assume that $A\in BV([a,b],\mathcal{L}(\rn))$ and $f\in G([a,b],\rn)$. Then, the Perron--Stieltjes integral $\int_{a}^{b}{\rm d}[A(s)]f(s)$ exists and we have
	\[
	\left\|\int_{a}^{b}{\rm d}[A(s)]f(s)\right\|\leq\int_{a}^{b}\|f(s)\|{\rm d}[{\rm var}_{a}^{s}(A)]\leq\|f\|_{\infty}{\rm var}_{a}^{b}(A).
	\] 
\end{theorem}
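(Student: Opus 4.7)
The plan is to establish existence by reducing to step functions and extending via a Cauchy argument, and then to derive the norm inequalities by passing to the limit in a termwise estimate on Stieltjes sums.

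First I would handle the case when $f$ is a step function. If $f$ takes finitely many values $c_1,\ldots,c_N$ on subintervals of a partition $\{a=t_0<\cdots<t_N=b\}$, then for any $\delta$-fine tagged partition whose division points refine the $t_k$'s, the defining Kurzweil sum for $G(\tau,t)=A(t)f(\tau)$ collapses to a finite linear combination of differences $A(\beta)-A(\alpha)$. A direct verification shows the integral exists and the bound $\|\int_a^b d[A(s)]f(s)\|\leq \|f\|_\infty\,\text{var}_a^b(A)$ is immediate. Next I invoke the well-known approximation of regulated functions by step functions (see e.g.\ \cite{Frankova}): there exists a sequence $(f_n)$ of step functions with $\|f_n-f\|_\infty\to 0$. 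Set $I_n:=\int_a^b d[A(s)]f_n(s)$. Denoting $S(g,P,\tau)=\sum_j [A(\alpha_j)-A(\alpha_{j-1})]g(\tau_j)$, the crucial \emph{uniform-in-partition} estimate
\[
\|S(f_n-f_m,P,\tau)\|\leq\|f_n-f_m\|_\infty\,\text{var}_a^b(A)
\]
passes to the limit along $\delta$-fine partitions and yields $\|I_n-I_m\|\leq\|f_n-f_m\|_\infty\,\text{var}_a^b(A)$, so $(I_n)$ is Cauchy and converges to some $I\in\rn$.

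To conclude that $I$ is the Kurzweil integral associated to $f$, I fix $\varepsilon>0$, pick $n$ large so that $\|f-f_n\|_\infty\,\text{var}_a^b(A)<\varepsilon/3$ and $\|I_n-I\|<\varepsilon/3$, and choose a gauge $\delta$ with $\|S(f_n,P,\tau)-I_n\|<\varepsilon/3$ for every $\delta$-fine $(P,\tau)$. A triangle inequality, using the same uniform estimate with $f$ in place of $f_m$, yields $\|S(f,P,\tau)-I\|<\varepsilon$, as required. For the inequalities, apply the termwise bound $\|A(\alpha_j)-A(\alpha_{j-1})\|\leq \text{var}_a^{\alpha_j}(A)-\text{var}_a^{\alpha_{j-1}}(A)$ to every Stieltjes sum to obtain
\[
\|S(f,P,\tau)\|\leq\sum_{j=1}^{\nu(P)}\|f(\tau_j)\|\bigl[\text{var}_a^{\alpha_j}(A)-\text{var}_a^{\alpha_{j-1}}(A)\bigr].
\]
The right-hand side is itself a Stieltjes sum for $\int_a^b\|f(s)\|\,d[\text{var}_a^s(A)]$, whose existence is guaranteed by applying the existence part already proved to the monotone function $s\mapsto\text{var}_a^s(A)$ together with the scalar regulated function $\|f(\cdot)\|$. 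Letting the gauge tighten yields the first inequality of the statement; the second then follows from $\|f(s)\|\leq\|f\|_\infty$ and $\int_a^b d[\text{var}_a^s(A)]=\text{var}_a^b(A)$.

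The principal difficulty is the interchange of limits implicit in passing from the step-function approximants $f_n$ to the regulated $f$: the gauge $\delta$ delivered by the Kurzweil definition depends on $f_n$, so the triangle-inequality argument needs a single gauge that simultaneously controls the approximation error $\|S(f-f_n,P,\tau)\|$, the integration error $\|S(f_n,P,\tau)-I_n\|$, and the tail $\|I_n-I\|$. The fact that $\|S(f-f_n,P,\tau)\|\leq\|f-f_n\|_\infty\,\text{var}_a^b(A)$ holds \emph{independently} of $(P,\tau)$ is precisely what decouples the first term from the gauge and makes the construction work; without a function of bounded variation on the integrator side this uniformity (and hence the existence) would break down.
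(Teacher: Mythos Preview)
The paper does not supply its own proof of this statement: Theorem~\ref{exist1} is quoted verbatim from \cite[Proposition~2.1]{MT2} and invoked later without further argument. So there is no in-paper proof to compare against; I can only assess your proposal on its merits.

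Your overall strategy---reduce to step functions, use uniform approximation of regulated functions by step functions, and pass to the limit via a partition-independent Cauchy estimate---is the standard and correct route to this result. Two points deserve tightening. First, in the step-function case you write ``for any $\delta$-fine tagged partition whose division points refine the $t_k$'s''; but Definition~\ref{Kint} requires control over \emph{every} $\delta$-fine partition, not only refining ones. The usual fix is to choose the gauge so that each jump point $t_k$ is forced to be a tag (take $\delta(\tau)$ small enough that $(\tau-\delta(\tau),\tau+\delta(\tau))$ avoids all $t_k$ when $\tau\notin\{t_k\}$), and then check that the contribution of subintervals tagged at $t_k$ is controlled by the jump sizes of $A$ there. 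Second, when you pass to the limit in the inequality $\|S(f,P,\tau)\|\le\sum_j\|f(\tau_j)\|\bigl[\mathrm{var}_a^{\alpha_j}(A)-\mathrm{var}_a^{\alpha_{j-1}}(A)\bigr]$, the two sides are governed by \emph{different} gauges (one for the integral with respect to $A$, one for the scalar integral with respect to $\mathrm{var}_a^{\cdot}(A)$); you should take their pointwise minimum so that a single $\delta$-fine partition makes both sides close to their respective integrals. With these two clarifications the argument is complete.
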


	Given an arbitrary function $\varphi:[a,b]\to X$ in $G([a,b],X)$, throughout this paper we will use the following notations
	\[
	\Delta^{+}\varphi(t) := \varphi(t^{+}) - \varphi(t),\quad \text{ and } \quad \Delta^{-}\varphi(t) := \varphi(t ) - \varphi(t^{-}),
	\]
	where $\varphi(t^{+})$ --respectively $\varphi(t^{-})$-- denotes the right-hand (left-hand) limit at $t$.

\subsection{Generalized ODEs}
We recall the concept of generalized ordinary differential equations (GODEs), along with some well known results.  In the next statements, we assume that $F \colon \Omega \to \rn$ is a given $\rn$--valued function, where $\Omega=\mathcal{O}\times[t_0,+\infty)$, $\mathcal{O}\subset\rn$ is an open subset of $\rn$, and $t_0\geq0$.
\begin{definition}\label{defGODE}
Let $J\subset[t_0,+\infty)$ be a nondegenerate interval. A function $x \colon J\to \rn$, is called a \emph{solution of the generalized ordinary differential equation (GODE)}
\begin{equation}
\dfrac{dx}{d\tau}=DF(x,t)  \label{GODE}
\end{equation}
on the interval $J$, if $(x(t),t)\in\Omega$ for every $t\in J$, and
\begin{equation}\label{intgode}
x(d)-x(c)=\int_{c}^{d}DF(x(\tau),t),
\end{equation}
whenever $[c,d] \subseteq J$.
\end{definition}

The integral on the right-hand side of \eqref{intgode} is understood in the sense of Definition~\ref{Kint}.

\begin{remark}
	As pointed out in \cite[pp.99--100]{SCHWABIK1}, a generalized ordinary differential equation denoted by \eqref{GODE} is a formal equation being defined via its solution. Actually, the derivative symbol involved in \eqref{GODE} does not imply a priori that the function $x(\cdot)$, which is a solution of \eqref{GODE}, possess a derivative in some sense. 
\end{remark}

\begin{prop}{\rm (\cite[Proposition~3.5]{SCHWABIK1})}
	If $x:J\to\rn$ is a solution of the GODE \eqref{GODE} on the interval $J\subset[t_0,+\infty)$, then for every fixed $\gamma \in J$ we have
	\begin{equation}
		x(s)=x(\gamma)+\int_{\gamma}^{s}DF(x(\tau),t), \qquad \text{ for any } s\in J. \label{inteq1}
	\end{equation}
	Furthermore, if a function $x:J\to\rn$ satisfies the integral equation \eqref{inteq1} for some $\gamma \in J$ and all $s\in J$, then $x(\cdot)$ is a solution of the GODE \eqref{GODE}.
\end{prop}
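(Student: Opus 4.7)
The proposition is an equivalence, so the plan splits naturally into two implications, and both are essentially unpacking Definition~\ref{defGODE} together with the sign convention $\int_{b}^{a}DG=-\int_{a}^{b}DG$ and the standard additivity of the Kurzweil integral on adjacent intervals (one of the basic properties noted after Definition~\ref{Kint}).

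For the forward implication, suppose $x$ is a solution on $J$ and fix $\gamma\in J$. Given $s\in J$, the plan is to do a short case analysis. If $s=\gamma$, the identity is trivial because $\int_{\gamma}^{\gamma}DF(x(\tau),t)=0$ by definition. If $s>\gamma$, then $[\gamma,s]\subseteq J$ and \eqref{intgode} applied to $[c,d]=[\gamma,s]$ gives exactly $x(s)-x(\gamma)=\int_{\gamma}^{s}DF(x(\tau),t)$. If $s<\gamma$, then $[s,\gamma]\subseteq J$ and \eqref{intgode} applied to $[c,d]=[s,\gamma]$ yields $x(\gamma)-x(s)=\int_{s}^{\gamma}DF(x(\tau),t)$; rearranging and applying the convention $\int_{s}^{\gamma}DF=-\int_{\gamma}^{s}DF$ produces \eqref{inteq1}.

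For the converse, suppose $x$ satisfies \eqref{inteq1} for some fixed $\gamma\in J$ and every $s\in J$. Take an arbitrary subinterval $[c,d]\subseteq J$. Writing the hypothesis at $s=c$ and at $s=d$,
\[
x(c)=x(\gamma)+\int_{\gamma}^{c}DF(x(\tau),t),\qquad x(d)=x(\gamma)+\int_{\gamma}^{d}DF(x(\tau),t),
\]
and subtracting gives $x(d)-x(c)=\int_{\gamma}^{d}DF(x(\tau),t)-\int_{\gamma}^{c}DF(x(\tau),t)$. Now invoke additivity of the Kurzweil integral on adjacent intervals (combined with the sign convention) to conclude $\int_{\gamma}^{d}DF-\int_{\gamma}^{c}DF=\int_{c}^{d}DF$, regardless of the relative order of $c$, $d$, $\gamma$. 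This is exactly \eqref{intgode}, so $x$ is a solution on $J$ in the sense of Definition~\ref{defGODE}.

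Neither direction has a genuine obstacle; the only thing requiring attention is the bookkeeping with the sign convention when $s<\gamma$ (or when $\gamma$ does not lie in $[c,d]$). The existence of the integrals involved is not an issue, since Definition~\ref{defGODE} already guarantees that $\int_{c}^{d}DF(x(\tau),t)$ exists for every $[c,d]\subseteq J$ (forward direction) and the hypothesis of the converse direction presumes the existence of $\int_{\gamma}^{s}DF(x(\tau),t)$ for all $s\in J$, from which the existence of $\int_{c}^{d}DF(x(\tau),t)$ for arbitrary $[c,d]\subseteq J$ follows by additivity.
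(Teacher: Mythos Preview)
Your argument is correct and is the standard one. Note, however, that the paper does not actually supply a proof of this proposition: it is quoted verbatim from \cite[Proposition~3.5]{SCHWABIK1} and stated without proof, so there is no ``paper's own proof'' to compare against. Your treatment matches the usual textbook derivation (which is essentially the one in Schwabik's monograph): the forward direction is a case split on the order of $s$ and $\gamma$ using the sign convention, and the converse subtracts two instances of \eqref{inteq1} and invokes additivity on adjacent intervals together with integrability on subintervals. The only point worth tightening slightly is your last sentence: the existence of $\int_{c}^{d}DF(x(\tau),t)$ for arbitrary $[c,d]\subseteq J$ is not literally ``by additivity'' but rather by the integrability-on-subintervals property (mentioned after Definition~\ref{Kint}) applied to an interval with endpoint $\gamma$, after which additivity gives the value. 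Also, to conclude that $x$ is a solution in the sense of Definition~\ref{defGODE} you implicitly need $(x(t),t)\in\Omega$ for all $t\in J$; this is tacit in the hypothesis since $F$ is only defined on $\Omega$, but it is worth stating.
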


With the purpose to obtain more specific information about the solutions of GODEs, we precise extra conditions over the function $F:\Omega\to \rn$. For that reason, we define a broad class of functions $F$ for which are included the theory of ordinary differential equations, measure differential equations, and impulsive differential equations, among others. See \cite{JK1} or  Chapter~V in \cite{SCHWABIK1}.

\begin{definition}
If there exists a nondecreasing function $h \colon [t_0,+\infty)\to\mathbb{R}$ such that $F \colon \Omega\to \rn$ satisfies
\begin{itemize}
\item[(F1)] $\|F(x,s_2)-F(x,s_1)\|\leq|h(s_2)-h(s_1)|$,
for all $(x,s_i)\in\Omega$, with $i=1,2$,
\item[(F2)] $\|F(x,s_2)-F(x,s_1)-F(y,s_2)+F(y,s_1)\|\leq|h(s_2)-h(s_1)| \|x-y\|$, for all $(x,s_i), (y,s_i)\in\Omega$, with $i=1,2$,
\end{itemize}
then we say that $F$ belongs to the class $\mathcal{F}(\Omega,h)$ or simply $F\in\mathcal{F}(\Omega,h)$.
\end{definition}

\begin{example}\label{ex1} Consider $x_0\in\rn$, and let $[a,b]\subset[t_0,+\infty)$ be a compact interval. Assume that $A\in BV([a,b], \mathcal{L}(\rn))$ and $g\in BV([a,b],\rn)$. Note that for every $(x,t)\in\rn\times [a,b]$, we can define 
	
\begin{equation}\label{Flinear}
		F(x,t)=A(t)x+g(t).
\end{equation}

Consider $\Omega={B_c(x_0)}\times [a,b]$, with $c\geq1$. The function $F$ defined by \eqref{Flinear} belongs to the class $\mathcal{F}(\Omega,h)$, where the function $h:[a,b]\to\real$ is defined by $h(t)=(c+\|x_0\|){\text{var}}_{a}^{t}(A) + {\text{var}}_{a}^{t}(g),$ for all $t\in[a,b]$.
\end{example}

The next lemma describes properties of the solutions of GODEs when the function $F$ satisfies the condition (F1).

\begin{lemma}\label{lemma2}
Let  $F\colon \Omega \to \rn$ be a function that satisfies condition {\rm (F1)}. If $x\colon J\to \rn$ is a solution of
the GODE \eqref{GODE} on the interval $J$, then the inequality
\[
\|x(s_2)-x(s_1)\|\leq |h(s_2)-h(s_1)|
\]
holds for every $s_1, s_2 \in J$.
\end{lemma}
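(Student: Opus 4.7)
The plan is to reduce the claim directly to the definition of the Kurzweil integral via the integral representation of GODE solutions. By symmetry of the stated inequality, I may assume $s_1 < s_2$ with $[s_1,s_2] \subseteq J$. Since $x$ is a solution of \eqref{GODE}, the previous proposition (the integral-equation form \eqref{inteq1}) gives
\[
x(s_2) - x(s_1) = \int_{s_1}^{s_2} DF(x(\tau),t).
\]
So the task is to show that the right-hand side is bounded in norm by $h(s_2) - h(s_1)$.

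Next I would unpack the definition of Kurzweil integrability. Fix $\varepsilon > 0$. By Definition~\ref{Kint}, there exists a gauge $\delta$ on $[s_1,s_2]$ such that for every $\delta$-fine tagged partition $(P,\tau) = \{([\alpha_{j-1},\alpha_j],\tau_j): j=1,\dots,\nu(P)\}$ of $[s_1,s_2]$, the Kurzweil sum
\[
S(F,x,P,\tau) := \sum_{j=1}^{\nu(P)} \bigl[F(x(\tau_j),\alpha_j) - F(x(\tau_j),\alpha_{j-1})\bigr]
\]
satisfies $\bigl\| S(F,x,P,\tau) - \int_{s_1}^{s_2} DF(x(\tau),t) \bigr\| < \varepsilon$. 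The existence of at least one such $\delta$-fine tagged partition is guaranteed by Cousin's lemma (recalled in the Remark after Definition~\ref{Kint}).

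Now I would apply condition (F1) termwise. For each $j$,
\[
\bigl\|F(x(\tau_j),\alpha_j) - F(x(\tau_j),\alpha_{j-1})\bigr\| \leq \bigl|h(\alpha_j) - h(\alpha_{j-1})\bigr| = h(\alpha_j) - h(\alpha_{j-1}),
\]
using that $h$ is nondecreasing. Summing and telescoping,
\[
\|S(F,x,P,\tau)\| \leq \sum_{j=1}^{\nu(P)} \bigl(h(\alpha_j) - h(\alpha_{j-1})\bigr) = h(s_2) - h(s_1).
\]
Combining with the triangle inequality and the integral approximation,
\[
\|x(s_2) - x(s_1)\| = \left\|\int_{s_1}^{s_2} DF(x(\tau),t)\right\| \leq \|S(F,x,P,\tau)\| + \varepsilon \leq h(s_2) - h(s_1) + \varepsilon.
\]
Letting $\varepsilon \to 0^+$ delivers the desired estimate, which for general $s_1,s_2 \in J$ reads $\|x(s_2)-x(s_1)\| \leq |h(s_2)-h(s_1)|$.

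There is no real obstacle here: the proof is essentially a direct translation of (F1) through the Kurzweil Riemann sums, the only technical point being the invocation of Cousin's lemma to ensure a $\delta$-fine partition exists so that the estimate is not vacuous.
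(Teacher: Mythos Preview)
Your proof is correct. The paper does not actually supply its own proof of this lemma; it is stated as a preliminary fact (the standard estimate from Schwabik's monograph \cite{SCHWABIK1}), so there is no in-paper argument to compare against. Your approach---writing $x(s_2)-x(s_1)$ as the Kurzweil integral, bounding each term of a $\delta$-fine Riemann--Kurzweil sum via (F1), telescoping to $h(s_2)-h(s_1)$, and letting $\varepsilon\to 0$---is precisely the standard proof one finds in the literature (e.g.\ \cite[Lemma~3.12]{SCHWABIK1}).
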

\begin{remark}
	When the function $F$ satisfies the condition (F1) and $x(\cdot)$ is a solution of the GODE \eqref{GODE} on $J$, since the function $h$ is locally of bounded variation on $J$ (nondecreasing function defined on $J$), it follows that $x(\cdot)$ is also locally of bounded variation on $J$. In addition, the continuity points of the function $h$ are continuity points of the solution $x(\cdot)$ of the GODE \eqref{GODE} on $J$.  
\end{remark}
The next theorem concerns with the existence-uniqueness of solutions for GODEs defined on a maximal interval, see \cite{FGM}.

\begin{theorem}\label{eumax}
	Let $F\in\mathcal{F}(\Omega,h) $, where $h \colon [t_0,+\infty)\to\mathbb{R}$ is a nondecreasing left--continuous function. Assume that $\Omega=\Omega_{F}:=\{(x,t)\in\Omega \colon  x + F(x,t^{+})-F(x,t)\in \mathcal{O} \}$. Then, for every $(x_0,s_0)\in\Omega$ there exists a unique maximal solution $x \colon [s_0, \omega(x_0,s_0))\to \rn$ of the GODE \eqref{GODE}, with $x(s_0)=x_0$ and $\omega(x_0,s_0)\leq +\infty$.
\end{theorem}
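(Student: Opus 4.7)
The plan is to proceed by a classical Picard iteration adapted to the Kurzweil integral, followed by uniqueness on a common interval and a Zorn-type extension to produce the maximal solution. The key subtlety lies in handling the jump discontinuities of $h$, which is exactly the role played by the assumption $\Omega=\Omega_F$.

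First I would establish local existence at $(x_0,s_0)\in\Omega$. Pick $c>0$ with $\overline{B_c(x_0)}\subset\mathcal{O}$, and use the left-continuity and monotonicity of $h$ to choose $\eta>0$ so small that
\[
h(s_0+\eta)-h(s_0^+)<\tfrac{1}{2}\qquad\text{and}\qquad h(s_0+\eta)-h(s_0)\leq c.
\]
Working on the complete metric space
\[
\mathcal{X}=\Bigl\{x\in G\bigl([s_0,s_0+\eta],\rn\bigr)\,:\, x(s_0)=x_0,\ x\text{ left-continuous},\ \|x(t)-x_0\|\leq c\text{ on }[s_0,s_0+\eta]\Bigr\},
\]
I define the operator $(Tx)(t)=x_0+\int_{s_0}^{t}DF(x(\tau),s)$. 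Theorem~\ref{exist1} together with (F1) shows $T$ is well-defined and $T(\mathcal{X})\subset\mathcal{X}$, and (F2) combined with Theorem~\ref{exist1} yields
\[
\|Tx-Ty\|_{\infty}\leq\bigl(h(s_0+\eta)-h(s_0^+)\bigr)\,\|x-y\|_{\infty},
\]
so $T$ is a contraction whose unique fixed point is a local solution. The hypothesis $\Omega=\Omega_F$ enters precisely here: it ensures that the admissible jump $x_0+F(x_0,s_0^+)-F(x_0,s_0)$ lies in $\mathcal{O}$, keeping the iterates inside the domain even across the discontinuity at $s_0$.

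For uniqueness on any common interval $[s_0,b]$, combining (F2) with Theorem~\ref{exist1} gives
\[
\|x_1(t)-x_2(t)\|\leq\int_{s_0}^{t}\|x_1(s)-x_2(s)\|\,d[h(s)],
\]
and a Gronwall-type inequality for the Perron--Stieltjes integral (see Chapter I of \cite{SCHWABIK1}) forces $x_1\equiv x_2$. I then assemble the maximal solution by considering the family $\mathcal{S}$ of all solutions of \eqref{GODE} with initial value $x_0$ defined on right half-open intervals $[s_0,\beta)$; by uniqueness any two such solutions agree on the intersection of their domains. Setting $\omega(x_0,s_0):=\sup\{\beta:\mathcal{S}\text{ contains a solution defined on }[s_0,\beta)\}\leq+\infty$ and defining $x^{*}$ as the common extension, the identity \eqref{inteq1} is preserved by additivity of the Kurzweil integral, yielding the desired maximal solution.

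The hard part is the delicate interplay between the discontinuities of $h$ and the domain constraint. Without $\Omega=\Omega_F$ the first iterate would already escape $\mathcal{O}$ at a jump point of $h$, breaking the scheme. The correct contraction constant is $h(s_0+\eta)-h(s_0^+)$ rather than $h(s_0+\eta)-h(s_0)$, because the jump at $s_0^+$ depends only on $x_0$ and on $F$ and therefore cancels in the difference $Tx-Ty$. Finally, the left-continuity of $h$ is what guarantees that the iterates remain left-continuous and hence that the fixed point is a left-continuous function of bounded variation, in accord with Lemma~\ref{lemma2}.
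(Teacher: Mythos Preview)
The paper does not prove this theorem; it is quoted from \cite{FGM} (Federson, Grau, Mesquita), so there is no in-paper argument to compare against. Your outline follows the standard route used in that reference and in Chapter~4 of \cite{SCHWABIK1}: local existence by a fixed-point argument, uniqueness via a Stieltjes--Gronwall estimate, and a union-of-solutions construction for maximality.

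Two points in the sketch deserve tightening. First, Theorem~\ref{exist1} concerns the bilinear Perron--Stieltjes integral $\int{\rm d}[A]f$ and does not literally cover $\int DF(x(\tau),t)$ for a nonlinear $F\in\mathcal{F}(\Omega,h)$; the correct substitutes are the integrability lemmas for functions of the class $\mathcal{F}(\Omega,h)$ in \cite[Ch.~3]{SCHWABIK1}, which give the same bounds you use. Second, your choice of $c$ and $\eta$ satisfying $h(s_0+\eta)-h(s_0)\leq c$ may be impossible when $\Delta^{+}h(s_0)>c$; the mere fact that the post-jump point $x_0^{+}:=x_0+F(x_0,s_0^{+})-F(x_0,s_0)$ lies in $\mathcal{O}$ does not place it in $\overline{B_c(x_0)}$. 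The standard fix, and the one used in \cite{FGM}, is to recenter the ball at $x_0^{+}$ (equivalently, treat the jump at $s_0$ separately and run the contraction on $(s_0,s_0+\eta]$ around $x_0^{+}$). The same remark applies at every subsequent jump during the prolongation, which is the actual reason the hypothesis $\Omega=\Omega_F$ is global and not merely a condition at $s_0$. With these adjustments, your argument matches the cited proof.
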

The condition $\Omega=\Omega_{F} :=\{(x,t)\in\Omega : x + F(x,t^{+})-F(x,t)\in \mathcal{O} \}$ ensures that there are no points in $\Omega$ for which the solution of the generalized ODE \eqref{GODE} can escape of $\mathcal{O}$, see \cite[p.127]{SCHWABIK1}. 

\subsection{Generalized linear differential equations}
In this subsection, we put our attention in the special case of GODEs which was regarded in Exam\-ple~\ref{ex1}. Throughout this paper, we consider that $J=[0,+\infty)$, and $A\colon J\to\mathcal{L}(\rn)$ is a $n\times n$ - matrix valued function of locally bounded variation on $J$.  

As it was explained in Chapter VI of \cite{SCHWABIK1}, if $g\in BV_{loc}(J,\rn)$, then a particular case of the GODE \eqref{GODE} is obtained if we consider the function $F\colon \rn\times J \to \rn$ defined by \eqref{Flinear}. This case in literature is known as generalized linear differential equation (GLDE), and it is denoted by 
\begin{equation}\label{LGODE}
\dfrac{dx}{d\tau}=D[A(t)x+g(t)].
\end{equation}
It follows from Definition~\ref{defGODE} that a function $x:J\to\rn$ is a solution of \eqref{LGODE} on $J$ if for every $a,b\in J$ we have
\[
x(b)-x(a)=\int_{a}^{b}D[A(t)x(\tau)+g(t)],
\]
where the integral is considered in the sense of Definition~\ref{Kint}. Taking into consideration the comments in Example~\ref{ex0}, the integral on the right-hand side can be decomposed as follows
\begin{eqnarray*}
\int_{a}^{b}D[A(t)x(\tau)+g(t)]&=&\int_{a}^{b}D[A(t)x(\tau)]+g(b)-g(a)\\
&=&\int_{a}^{b}{\rm d}[A(s)]x(s) +g(b)-g(a).
\end{eqnarray*}

Summarising, a function $x\colon J\to\rn$ is a solution of \eqref{LGODE} if for every $a,b\in J$ we have 
\[
x(b)-x(a)=\int_{a}^{b}{\rm d}[A(s)]x(s) +g(b)-g(a).
\]

\begin{remark}
	Similarly as in the classical theory of ordinary differential equations, in the case that the function $g:J\to\rn$ is identically null, Eq. \eqref{LGODE} is called \emph{homogeneous GLDE} and it is denoted by
	\begin{equation}\label{LhGODE}
	\dfrac{dx}{d\tau}=D[A(t)x].
	\end{equation}
	
\end{remark}

To develop a qualitative theory for the GLDE \eqref{LGODE}, it is neccesary to introduce conditions over the function $A$. For a detailed discussion about this aspect, see \cite[Proposition~6.4]{SCHWABIK1}. Consider the following condition

\begin{description}
	\item[(D)] The matrix $[I-\Delta^{-}A(t)]$ is invertible for all $t\in (0,+\infty)$ and the matrix $[I+\Delta^{+}A(t)]$ is invertible for every $t\in J$. 
\end{description}

Now, we will see that for every  $(x_0,s_0)\in\rn\times J$ there is a unique solution $x(\cdot)$ of \eqref{LGODE} with initial condition $x(s_0)=x_0$ defined on the whole interval $[s_0,+\infty)$, \textit{i.e.} in the generalized linear case, under appropriate assumptions, we always can prolongate the solution and consider $\omega(s_0,x_0)=+\infty$. 

\begin{theorem}{\rm (\cite[Theorem~6.5]{SCHWABIK1})}
Let $g\in BV_{loc}(J,\rn)$. Assume that $A\in BV_{loc}(J,\mathcal{L}(\rn))$ and {\rm (D)} holds. Then for every $(x_0,s_0)\in\rn\times J$ there exists a unique solution $x(\cdot)\in BV_{loc}(J,\rn)$ of the GLDE \eqref{LGODE} with initial condition $x(s_0)=x_0$.
\end{theorem}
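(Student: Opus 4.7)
The plan is to cast the initial value problem as the Stieltjes integral equation
\[
x(t) = x_0 + \int_{s_0}^{t}{\rm d}[A(s)]\, x(s) + g(t) - g(s_0), \qquad t \in J,
\]
and solve it on each compact sub-interval of $J$ by a Banach fixed-point argument in the space of regulated functions, using condition (D) to propagate the solution across the jump points of $A$. Global existence and uniqueness on $J$ then follow by concatenation in both directions from $s_0$.

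On any sub-interval $[a,b]\subset J$ with $q := \text{var}_a^b(A) < 1$, Theorem~\ref{exist1} ensures that the operator
\[
(\mathcal{T}y)(t) := y_a + \int_a^t {\rm d}[A(s)]\, y(s) + g(t) - g(a)
\]
maps the closed affine subspace $\{y\in G([a,b],\rn) : y(a) = y_a\}$ into itself and satisfies $\|\mathcal{T}y - \mathcal{T}z\|_\infty \leq q\,\|y-z\|_\infty$, so by the Banach contraction principle it admits a unique fixed point $x$. Substituting this fixed point back into its defining relation and reapplying Theorem~\ref{exist1} shows $x\in BV([a,b],\rn)$ with a variation bound depending linearly on $\|y_a\|$ and $\text{var}_a^b(g)$. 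Since $\text{var}_{s_0}^{T}(A) < \infty$, only finitely many points $t_1<\ldots<t_N$ in $[s_0,T]$ can satisfy $\max(\|\Delta^+ A(t_i)\|, \|\Delta^- A(t_i)\|) \geq 1/4$; one may therefore partition $[s_0,T]$ into finitely many closed sub-intervals, each of which either has $A$-variation strictly less than $1$ or reduces to a single point $t_i$.

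At each jump point $t^*$, evaluating the integral equation on shrinking one-sided neighbourhoods yields the algebraic relations
\[
[I-\Delta^- A(t^*)]\, x(t^*) = x(t^{*-}) + \Delta^- g(t^*), \qquad x(t^{*+}) = [I+\Delta^+ A(t^*)]\, x(t^*) + \Delta^+ g(t^*),
\]
which by condition (D) are uniquely solvable in both directions, letting $x$ be propagated forward through left-jumps (inverting $I-\Delta^- A$) and backward through right-jumps (inverting $I+\Delta^+ A$). Alternating the contraction step on each non-degenerate sub-interval with these jump relations at each $t_i$ produces a unique solution on $[s_0,T]$; exhausting $J$ on both sides of $s_0$ gives the required solution in $BV_{loc}(J,\rn)$. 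The main obstacle is precisely these jumps: the pure contraction argument fails wherever $A$ has a jump of operator norm at least $1$, because $\text{var}(A)$ on any one-sided neighbourhood of such a point is bounded below by the size of the jump, so it cannot be brought under the contraction threshold. Condition (D) is the exact algebraic hypothesis that rescues the argument, converting each one-sided jump transition into an invertible linear problem; the compatibility of successive fixed-point pieces across a jump point is then automatic from the two formulas above, since each one-sided limit appearing in them is exactly the value delivered by the adjacent contraction piece.
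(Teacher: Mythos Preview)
The paper does not supply its own proof of this statement: it is quoted verbatim as \cite[Theorem~6.5]{SCHWABIK1} and invoked as a known result, so there is no in-paper argument to compare against. Your sketch is in fact the standard route taken in Schwabik's monograph: rewrite the problem as the Perron--Stieltjes integral equation, run a contraction in $G([a,b],\rn)$ on sub-intervals where $\text{var}(A)<1$, and use condition (D) to pass through the finitely many large jumps via the algebraic relations you wrote down. So methodologically you are aligned with the source the paper cites, and the one-sided jump formulas you derive are correct.

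One point in your partitioning step is stated too quickly. Isolating the finitely many points where a one-sided jump of $A$ has norm $\geq 1/4$ does not by itself force the $A$-variation on the complementary pieces below $1$: the accumulated variation from the continuous part and from the infinitely many small jumps can still exceed $1$ on a fixed sub-interval. What you actually need is a two-stage refinement: first remove the large-jump points, and then further subdivide each remaining interval (using $\text{var}_{s_0}^{T}(A)<\infty$ and, say, right-continuity of $s\mapsto \text{var}_{s_0}^{s}(A)$ away from left-jumps) so that every non-degenerate piece has $A$-variation below the contraction threshold. This is routine, but as written your sentence ``one may therefore partition \ldots'' conflates the two refinements. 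Once that is tightened, the concatenation across the degenerate jump intervals via the invertibility in (D) goes through exactly as you describe.
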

In the next, we define the concept of fundamental matrix for the homogeneous GLDE \eqref{LhGODE}.
\begin{definition}
A matrix valued function $X:J\to\mathcal{L}(\rn)$ is called a \emph{fundamental matrix} of the homogeneous GLDE \eqref{LhGODE} if $X$ satisfies the identity
\begin{equation}\label{meq}
X(t)-X(\xi)=\int_{\xi}^{t}{\rm d}[A(s)]X(s)
\end{equation}
for all $\xi,t\in J$, and if the matrix $X(t)$ is invertible for at least one value $t\in J$.
\end{definition}

\begin{remark}
	The integral on the right-hand side of the equation \eqref{meq} should be understood in the sense of Definition~\ref{Kint}, where $G(\tau,t)=A(t)X(\tau)$. In addition, it can be proved that $X:J\to\mathcal{L}(\rn)$ satisfies \eqref{meq} if, and only if, any column of $X$ is a solution of \eqref{LhGODE}, \textit{i.e.} if every $k$-th column $X_k$ of $X$ satisfies 
	\[
	X_k(t)-X_k(\xi)=\int_{\xi}^{t}{\rm d}[A(s)]X_k(s),
	\]
	for all $t,\xi\in J$. Furthermore, if  $A\in BV_{loc}(J,\mathcal{L}(\rn))$ and {\rm (D)} holds, then every fundamental matrix of \eqref{LhGODE} is invertible for all $t\in J$, see \cite[Theorem~6.12]{SCHWABIK1}. 
\end{remark}

The next theorems provide an essential tool to study the theory of GLDEs. Let us emphasize that, despite to seem analogue to the classical world of nonautonomous linear systems, the proofs of the following results are plenty of bulky technicalities and subtle steps most of them related with properties of the Perron--Stieltjes integral, and follow a completely different approach to the proofs in the classical case of ordinary differential equations. For details, we refer \cite{SCHWABIK1,SCHWABIK2}.   	

\begin{theorem}\label{pU}
 Assume that $A\in BV_{loc}(J,\mathcal{L}(\rn))$ and {\rm (D)} holds. Then there exists a uniquely determined $n\times n$ - matrix valued function $U:J\times J\to\mathcal{L}(\rn)$ such that 
 \begin{equation}\label{FO}
 U(t,s)=I+\int_{s}^{t}{\rm d}[A(r)]U(r,s),
 \end{equation}
 for $t,s\in J$. In addition, the function $U$ has the following properties:
 \begin{itemize}
 	\item[{\rm (a)}] $U(t,t)=I$, for all $t\in J$.
 	\item[{\rm (b)}] For every compact interval $[a,b]\subset J$, there exists a constant $M\geq0$ such that 
 	\begin{multicols}{2}
	\begin{itemize}
	\item[{\rm(i)}] $\|U(t,s)\|\leq M$ for all $t,s\in[a,b]$,
	\item[{\rm(ii)}] {\rm var}$_{a}^{b}(U(t,\cdot))\leq M$ for $t\in[a,b]$,
	\item[{\rm(iii)}] {\rm var}$_{a}^{b}(U(\cdot,s))\leq M$ for $s\in[a,b]$,
\end{itemize}
 	\end{multicols}
 \item[{\rm (c)}] For every $r,s,t\in J$ the relation $U(t,s)=U(t,r)U(r,s)$ holds.
 \item [{\rm (d)}] $U(t,s)\in\mathcal{L}(\rn)$ is invertible for every $t,s\in J$, and the relation $[U(t,s)]^{-1}=U(s,t)$ holds.
 \item [{\rm (e)}] For $t,s\in J$, the following equality holds
 \begin{multicols}{2}
 	\begin{itemize}
 		\item [{\rm(i)}] $U(t^{+},s)=[I+\Delta^{+}A(t)]U(t,s)$,
 		\item [{\rm(ii)}] $U(t^{-},s)=[I-\Delta^{-}A(t)]U(t,s)$,
 		\item [{\rm(iii)}] $U(t,s^{+})=U(t,s)[I+\Delta^{+}A(t)]^{-1}$,
 		\item [{\rm(iv)}] $U(t,s^{-})=U(t,s)[I-\Delta^{-}A(t)]^{-1}$.
 	\end{itemize}
 \end{multicols}
 
 \end{itemize}
\end{theorem}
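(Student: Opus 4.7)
The plan is to construct $U(t,s)$ column by column using the existence--uniqueness result for the homogeneous GLDE \eqref{LhGODE} already available under hypothesis (D), and then derive the structural properties (a)--(e) from the defining integral equation \eqref{FO} together with the uniqueness principle and the jump properties of the Perron--Stieltjes integral. For each fixed $s\in J$ and each $k\in\{1,\ldots,n\}$, let $u_k(\cdot,s)$ denote the unique solution in $BV_{loc}(J,\rn)$ of \eqref{LhGODE} with $u_k(s,s)=e_k$, where $e_k$ is the $k$-th canonical vector. Assemble $U(t,s)$ as the matrix whose columns are $u_1(t,s),\ldots,u_n(t,s)$. By linearity of the Perron--Stieltjes integral, $U(\cdot,s)$ satisfies \eqref{FO}, and uniqueness of each column yields uniqueness of the whole matrix. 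Property (a) follows by setting $t=s$ in \eqref{FO}.

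For (b), fix a compact interval $[a,b]\subset J$ and apply Theorem~\ref{exist1} to \eqref{FO}: for $t,s\in[a,b]$ one obtains
\[
\|U(t,s)\|\leq 1+\int_{s}^{t}\|U(r,s)\|\,{\rm d}[{\rm var}_{a}^{r}(A)],
\]
and a Gr\"onwall-type inequality for the Perron--Stieltjes integral produces a uniform bound $M$ depending only on ${\rm var}_{a}^{b}(A)$, proving (i). The variation estimates (ii) and (iii) then follow by applying Theorem~\ref{exist1} directly to the right-hand side of \eqref{FO} and using the bound from (i).

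For (c), fix $r,s\in J$ and set $Y(t):=U(t,r)U(r,s)$. Multiplying \eqref{FO} (with $s$ replaced by $r$) on the right by $U(r,s)$ gives
\[
Y(t)=U(r,s)+\int_{r}^{t}{\rm d}[A(\sigma)]\,Y(\sigma).
\]
Splitting the integral in \eqref{FO} at $r$ shows that $t\mapsto U(t,s)$ satisfies the same integral equation starting at $r$ with value $U(r,s)$. By uniqueness, $Y(t)=U(t,s)$, which is exactly (c). Property (d) is then immediate: taking $t=s$ in (c) yields $U(s,r)U(r,s)=U(s,s)=I$, and exchanging the roles of $r$ and $s$ gives the two-sided inverse, so $[U(t,s)]^{-1}=U(s,t)$.

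Finally, (e) is extracted from the jump formulas for the Perron--Stieltjes integral. Since $U(\cdot,s)$ is locally of bounded variation by (b), it is regulated, and the one-sided limits at $t$ of the indefinite integral $r\mapsto\int_{s}^{r}{\rm d}[A]\,U(\cdot,s)$ pick up exactly $\Delta^{+}A(t)\,U(t,s)$ and $-\Delta^{-}A(t)\,U(t,s)$, respectively; inserting these into \eqref{FO} yields (i) and (ii). For (iii) and (iv), one uses (d) to write $U(t,s)=[U(s,t)]^{-1}$ and applies (i)--(ii) to the second argument, invoking hypothesis (D) to guarantee that $[I\pm\Delta A(t)]$ is invertible. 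The main technical obstacle is the Gr\"onwall estimate underlying (b), which requires the Perron--Stieltjes analogue of the classical Gr\"onwall inequality; once this is in hand, everything else reduces to straightforward manipulations of \eqref{FO} and the uniqueness of solutions.
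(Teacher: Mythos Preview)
The paper does not actually prove Theorem~\ref{pU}; it is stated in Section~2.2 as a preliminary result with the proofs deferred to Schwabik's monograph \cite{SCHWABIK1,SCHWABIK2} (see the paragraph immediately preceding the theorem). So there is no ``paper's own proof'' to compare against; I can only evaluate your outline on its merits and against the standard treatment in \cite{SCHWABIK1}.

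Your outline is essentially the standard one and is correct in spirit: build $U(\cdot,s)$ columnwise from the existence--uniqueness theorem for \eqref{LhGODE}, read off (a) from \eqref{FO}, get (c) by uniqueness applied to $Y(t)=U(t,r)U(r,s)$, deduce (d) from (c), and obtain (e) from the jump formulas for the indefinite Perron--Stieltjes integral combined with (d). The Gr\"onwall--Stieltjes inequality you flag for (b)(i) does exist in the form you need (see \cite[Corollary~1.43]{SCHWABIK1}), so that step goes through; note however that the bound it produces involves the jumps of $A$ and is not simply $\exp({\rm var}_a^b(A))$.

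There is one genuine gap in the order of your argument. Item (b)(ii), the variation bound in the \emph{second} variable $s\mapsto U(t,s)$, does \emph{not} follow ``directly'' from Theorem~\ref{exist1} applied to \eqref{FO}: equation \eqref{FO} is an integral equation in the first variable only and gives no immediate control on how $U(t,s)$ varies with $s$. The clean fix is to establish (c) and (d) first (your proofs of these use only (b)(i), not (b)(ii)), then write $U(t,s)=U(t,a)[U(s,a)]^{-1}$ and deduce (b)(ii) from (b)(iii) together with a bound on $\|[U(s,a)]^{-1}\|=\|U(a,s)\|$ coming from (b)(i). Alternatively one can show that $U(t,\cdot)$ satisfies an adjoint integral equation and argue as for (b)(iii); this is closer to what Schwabik does. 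Either way, reorder your steps so that (b)(ii) comes after (c)--(d).
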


The $n\times n$ - matrix valued function $U$ defined by \eqref{FO} is called transition matrix of the homogeneous GLDE \eqref{LhGODE}. 
\begin{theorem}\label{euhGLDE}{\rm (\cite[Theorem~6.14]{SCHWABIK1})}
Assume that $A\in BV_{loc}(J,\mathcal{L}(\rn))$ and {\rm (D)} holds. Then for every $(x_0,s_0)\in \rn\times J$, the unique solution $x(\cdot)\in BV_{loc}(J,\rn)$ of the homogeneous GLDE \eqref{LhGODE} with initial condition $x(s_0)=x_0$ is given by
\[
x(t,s_0,x_0)=U(t,s_0)x_0, \qquad \text{ fot any } t\in J,
\]
where $U$ is defined by \eqref{FO}.
\end{theorem}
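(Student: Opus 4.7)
The plan is to exhibit the solution explicitly via the transition matrix supplied by Theorem \ref{pU}, and then invoke the uniqueness clause of the preceding existence-uniqueness theorem for GLDEs. Concretely, I would define the candidate $x\colon J\to\rn$ by $x(t):=U(t,s_0)x_0$ and check the two requirements in Definition \ref{defGODE}: the initial condition and the defining integral identity.

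First, the initial value is immediate: by Theorem \ref{pU}(a), $U(s_0,s_0)=I$, so $x(s_0)=Ix_0=x_0$. Next, for the integral identity, I would take arbitrary $a,b\in J$, subtract the two instances of the defining relation \eqref{FO} for $U$ at $t=b$ and $t=a$, and use additivity of the Perron–Stieltjes integral on adjacent intervals to obtain
\[
U(b,s_0)-U(a,s_0)=\int_a^b \mathrm{d}[A(r)]\,U(r,s_0).
\]
Right-multiplying by the constant vector $x_0$ and pushing it inside the integral would yield
\[
x(b)-x(a)=\int_a^b \mathrm{d}[A(r)]\,U(r,s_0)x_0=\int_a^b \mathrm{d}[A(r)]\,x(r),
\]
which is exactly the integral characterization of a solution of \eqref{LhGODE}. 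The local bounded variation $x\in BV_{loc}(J,\rn)$ then follows from Theorem \ref{pU}(b)(iii), which bounds $\mathrm{var}_a^b(U(\cdot,s_0))$ on every compact subinterval. Uniqueness comes for free: under the hypotheses $A\in BV_{loc}(J,\mathcal{L}(\rn))$ and (D), the existence-uniqueness theorem stated just before Theorem \ref{pU} forces any solution in $BV_{loc}(J,\rn)$ of \eqref{LhGODE} with $x(s_0)=x_0$ to coincide with the candidate $x(\cdot)$.

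The only delicate point I anticipate is rigorously justifying the step where the constant vector $x_0$ is pulled inside the Perron–Stieltjes integral. Going back to Definition \ref{Kint} applied to $G(\tau,t)=A(t)U(\tau,s_0)x_0$, each Riemann–Stieltjes-type sum factors as $\bigl(\sum_j [A(\alpha_j)-A(\alpha_{j-1})]U(\tau_j,s_0)\bigr)x_0$, so any gauge that works for the matrix-valued integrand $A(t)U(\tau,s_0)$ simultaneously works for the vector-valued one, and the limits coincide. Once that verification is recorded, the proof reduces to the brief chain of identities above.
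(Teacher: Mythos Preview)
The paper does not supply a proof of this theorem; it is quoted directly from \cite[Theorem~6.14]{SCHWABIK1}. Your argument is correct and is the standard one: verify that $t\mapsto U(t,s_0)x_0$ satisfies the initial condition via Theorem~\ref{pU}(a), that it satisfies the integral equation by subtracting two instances of \eqref{FO} and right-multiplying by $x_0$, and then invoke the uniqueness clause of the preceding existence-uniqueness theorem. One small point: Theorem~\ref{pU}(b)(iii) as stated only bounds $\mathrm{var}_a^b(U(\cdot,s_0))$ when $s_0\in[a,b]$, so to conclude $x\in BV_{loc}(J,\rn)$ for an arbitrary compact $[a,b]\subset J$ you should enlarge the interval to $[\min(a,s_0),\max(b,s_0)]$ and use that variation over a subinterval is dominated by variation over the larger one.
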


In the next, we recall the variation of constants formula for the GLDE \eqref{LGODE}.

\begin{theorem}\label{vcf}{\rm (\cite[Theorem~6.17]{SCHWABIK1})}
	Assume that $A\in BV_{loc}(J,\mathcal{L}(\rn))$ and {\rm (D)} holds. Then for every $(x_0,s_0)\in \rn\times J$ and $g\in BV(J,\rn)$, the unique solution of \eqref{LGODE} with initial condition $x(s_0)=x_0$ can be written in the form 
	\begin{equation}\label{vcf1}
	x(t,s_0,x_0)=U(t,s_0)x_0 + g(t)-g(s_0)-\int_{s_0}^{t}{\rm d}[U(t,s)](g(s)-g(s_0)),
	\end{equation}
	for all $t\in J$, where $U$ is the transition matrix defined by \eqref{FO}.
\end{theorem}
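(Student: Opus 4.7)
The plan is to verify directly that the function $y(t)$ defined by the right-hand side of \eqref{vcf1} solves \eqref{LGODE} with $y(s_0)=x_0$, and then to invoke the uniqueness quoted immediately before \eqref{FO}.

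First I would check that $y$ is well defined: since $g\in BV(J,\rn)\subset G(J,\rn)$ and, by Theorem~\ref{pU}(b)(ii), the map $s\mapsto U(t,s)$ is of bounded variation on every compact subinterval of $J$, Theorem~\ref{exist1} makes the integral $\int_{s_0}^{t}{\rm d}[U(t,s)](g(s)-g(s_0))$ meaningful, and evaluating at $t=s_0$ with $U(s_0,s_0)=I$ gives $y(s_0)=x_0$ at once. Next I would prove that $y(d)-y(c)=\int_{c}^{d}{\rm d}[A(s)]y(s)+g(d)-g(c)$ for every $c,d\in J$. Writing $h(s)=g(s)-g(s_0)$ and expanding $y(d)-y(c)$ from the definition of $y$, the telescoping $g$-terms cancel and the identity to prove becomes
\begin{equation*}
\bigl(U(d,s_0)-U(c,s_0)\bigr)x_0 - \int_{s_0}^{d}{\rm d}[U(d,s)]h(s) + \int_{s_0}^{c}{\rm d}[U(c,s)]h(s) \;=\; \int_{c}^{d}{\rm d}[A(r)]y(r).
\end{equation*}
By \eqref{FO}, the first term on the left equals $\int_{c}^{d}{\rm d}[A(r)]U(r,s_0)x_0$, which matches the corresponding piece of $y(r)$ on the right. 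Splitting $\int_{s_0}^{d}=\int_{s_0}^{c}+\int_{c}^{d}$ and applying the cocycle identity $U(d,s)=U(d,c)U(c,s)$ of Theorem~\ref{pU}(c) to pull $U(d,c)$ outside the integral over $[s_0,c]$, followed by \eqref{FO} in the form $U(d,c)-I=\int_{c}^{d}{\rm d}[A(r)]U(r,c)$, the remaining equality reduces to
\begin{equation*}
-\int_{c}^{d}{\rm d}[U(d,s)]h(s) \;=\; \int_{c}^{d}{\rm d}[A(r)]h(r) - \int_{c}^{d}{\rm d}[A(r)]\int_{c}^{r}{\rm d}[U(r,s)]h(s).
\end{equation*}

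This last identity is the technical heart of the theorem and the step I expect to consume most effort. The natural route is a Fubini-type interchange for Perron--Stieltjes integrals applied to the iterated integral on the right, followed by using $U(d,s)-I=\int_{s}^{d}{\rm d}[A(r)]U(r,s)$ for $s\in[c,d]$ to collapse the inner factor $U(r,s)$ against ${\rm d}[A(r)]$; an integration-by-parts formula for the Perron--Stieltjes integral may also be needed to relate $\int_{c}^{d}{\rm d}[U(d,s)]h(s)$ to $\int_{c}^{d}U(d,s)\,{\rm d}[h(s)]$ modulo boundary and jump contributions. The main obstacle is precisely that this is not a pure Fubini statement: it has to absorb the simultaneous jumps of $A$ and $U$, which is where hypothesis (D) --- guaranteeing the invertibility of $U(t,s)$ --- and the one-sided relations in Theorem~\ref{pU}(e) become decisive. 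Once the identity is in place, $y$ solves \eqref{LGODE} with $y(s_0)=x_0$, and uniqueness forces $y(t)=x(t,s_0,x_0)$ for every $t\in J$.
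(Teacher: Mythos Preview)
The paper does not prove this theorem at all: it is quoted verbatim from \cite[Theorem~6.17]{SCHWABIK1} and serves as background for the subsequent Remark~\ref{MT-MF}. So there is no proof in the paper to compare your proposal against; any genuine argument must be sought in Schwabik's monograph.

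That said, your outline is the natural one and is essentially the route taken in \cite{SCHWABIK1}: define $y$ by the right-hand side of \eqref{vcf1}, check $y(s_0)=x_0$, and verify the integral equation directly, invoking uniqueness at the end. You have correctly isolated the technical heart --- the interchange-of-integrals identity relating $\int_{c}^{d}{\rm d}[U(d,s)]h(s)$ to the iterated integral $\int_{c}^{d}{\rm d}[A(r)]\int_{c}^{r}{\rm d}[U(r,s)]h(s)$ --- and correctly anticipated that a Perron--Stieltjes Fubini theorem together with the one-sided jump relations of Theorem~\ref{pU}(e) are the relevant tools. Be aware, however, that this step is not a sketch one can simply ``fill in'': the Fubini theorem for Kurzweil--Stieltjes integrals requires hypotheses that must be checked against the bounded-variation and regulated properties available here, and the jump bookkeeping is delicate. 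Schwabik's original proof in \cite[\S6]{SCHWABIK1} in fact proceeds by a somewhat different organisation of the same ingredients, so if you pursue this you should consult that reference rather than expect the reduction you wrote to go through in one line.
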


\begin{remark}
\label{MT-MF}
	As in the classical theory of nonautonomous linear systems, if $X:J\to\mathcal{L}(\rn)$ is an arbitrary fundamental matrix of the homogeneous GLDE \eqref{LhGODE}, then $U(t,s)=X(t)X^{-1}(s)$, for every $t,s\in J$. Therefore, it follows that the identity \eqref{vcf1} can be rewritten in the form 
	\[
	x(t,s_0,x_0)=X(t)\left(X^{-1}(s_0)x_0 + X^{-1}(t)(g(t)-g(s_0))-\int_{s_0}^{t}{\rm d}[X^{-1}(s)](g(s)-g(s_0))\right).
	\]
	Now, using integration by parts formula \cite[Theorem~I.4.33]{SCHWABIK3}, we get
\begin{equation}\label{ibpf}
\begin{split}
x(t,s_0,x_0)&= X(t)\Bigg[X^{-1}(s_0)x_0 +\int_{s_0}^{t}X^{-1}(s){\rm d}[g(s)-g(s_0)]-\sum_{s_0< \tau \leq t}\Delta^{-}X^{-1}(\tau)\Delta^{-}g(\tau)\\
& \ \ +\sum_{s_0\leq \tau < t}\Delta^{+}X^{-1}(\tau)\Delta^{+}g(\tau)\Bigg],
\end{split}
\end{equation}
or equivalently,
\begin{equation}\label{ibpf2}
\begin{split}
x(t,s_0,x_0)&=U(t,s_0)x_0 +\int_{s_0}^{t}U(t,s){\rm d}[g(s)-g(s_0)]-\sum_{s_0< \tau \leq t}\Delta^{-}U(t,\tau)\Delta^{-}g(\tau)\\
&\ \ + \sum_{s_0\leq \tau < t}\Delta^{+}U(t,\tau)\Delta^{+}g(\tau).
\end{split}
\end{equation} 
\end{remark}

\section{Stability for generalized linear differential equations}

In this section, we will establish Lyapunov-type stability results for the homogeneous GLDE  \eqref{LhGODE}. Throughout this section, we will consider that $J=[0,+\infty)$, $A\in BV_{loc} (J,\mathcal{L}(\rn))$ and condition (D) holds, the function $F:\rn\times J\to\rn$ of the GODE \eqref{GODE} will be defined by $F(x,t)=A(t)x$, for all $(x,t)\in\rn\times J$, \emph{i.e.} we are concerned with the homogeneous GLDE \eqref{LhGODE}. It is clear from definition that the trivial solution $x\equiv 0$ is a 
solution of the homogeneous GLDE \eqref{LhGODE}. 

	Since Theorem~\ref{euhGLDE}, we can guarantee that the unique solution $x(\cdot,s_0,x_0)$ of the homogeneous GLDE \eqref{LhGODE} with initial condition $x(s_0)=x_0\in\rn$ is defined on the whole interval $[s_0,+\infty)$, and $x(t,s_0,x_0)=U(t,s_0)x_0$, for all $t\geqslant s_0\geq 0$, where $U(t,s_{0})$ is the transition matrix defined by \eqref{FO}.

Let us recall stability concepts for the trivial solution $x\equiv 0$ of the generalized ODE \eqref{GODE}, which were introduced in \cite{FGMT1}. These notions emulate the classical Lyapunov--type stability definitions for nonautonomous ODEs, see for instance \cite[Def.4.1.12]{Burton}. We reformulate the statement adapted for the homogeneous GLDE \eqref{LhGODE}.

\begin{definition}\label{stability}
The trivial solution $x\equiv 0$ of the homogeneous GLDE \eqref{LhGODE} is said to be
\begin{itemize}
\item {\bf Stable}, if for every $s_0\geq 0$ and every $\varepsilon>0$, there exists a $\delta=\delta(s_0,\varepsilon)>0$ such that if $x_0\in \rn$ and $\|x_0\|<\delta$, then $\|x(t,s_0,x_0)\|=\|x(t)\|<\varepsilon$ for all $t\in[s_0,+\infty)$.
\item {\bf Uniformly stable}, if it is stable with $\delta>0$ independent of $s_0$.
\item {\bf Uniformly asymptotically stable}, if it is uniformly stable and if there exists $\delta_0>0$ such that, for each $\varepsilon> 0$,
there is a $T = T (\varepsilon)\geq0$  such that if $s_0\geq 0$ and $x_0\in \rn$ with
\[
\|x_0\|<\delta_0,
\]
then
\[
\|x(t, s_0 , x_0 )\| = \|x(t)\| < \varepsilon,
\]
for all $t\in [s_0 + T(\varepsilon),+\infty)$.
\end{itemize}

\end{definition}
Furthermore, we consider the following notions of global asymptotic stability for GLDEs.
\begin{definition}\label{gstability}
The trivial solution $x\equiv 0$ the homogeneous GLDE \eqref{LhGODE} is said to be
\begin{itemize}
\item {\bf Globally asymptotically stable}, if it is stable and for any $x(s_0)=x_0\in\rn$, we get  $\|x(t,s_0,x_0)\|\longrightarrow 0$ as $t\rightarrow +\infty$.
\item {\bf Globally uniformly exponentially stable}, if there exist constants $\alpha, K >0$ such that for any $x(s_0)=x_0\in\rn$, we get $\|x(t,s_0,x_0)\|<K \left\|x_0\right\| e^{-\alpha(t-s_0)} $ for all $t\in[s_0,+\infty)$.
\item {\bf Globally non uniformly exponentially stable}, if there exist a constant $\alpha>0$ and a nondecreasing function $K\colon [0,+\infty)\to [1,+\infty)$ such that for any $x(s_0)=x_0\in\rn$, we get $\|x(t,s_0,x_0)\|<K(s_{0}) \left\|x_0\right\| e^{-\alpha(t-s_0)}$ for all $t\in[s_0,+\infty)$.
\end{itemize}

\end{definition}
Let us emphasise that the global exponential stabilities --uniform and non uniform-- are particular cases of the global asymptotic stability notion. 
\begin{remark}
	Let $B_c$ be the open ball in $\rn$ centered at the origin with radius $c>0$, and $\Omega=B_{c}\times [t_0,+\infty)$ where $t_0\geq0$. The notions of asymptotical stability and exponential asymptotical stability of the origin have been introduced in \cite[Definition 3.1]{GGM} for nonlinear generalized ODEs of type 
	\[
	\frac{dx}{d\tau}=DF(x,t),
	\]
	where the function $F:\Omega\to\mathbb{R}^n$ belongs to the class $\mathcal{F}(\Omega,h)$ and  satisfies $F(0,t)-F(0,s)=0$ for all $t,s\geq t_{0}$. In this case, the initial conditions are assumed in $B_{\rho}$ with $0<\rho<c$. Nevertheless, when $x\mapsto F(t,x)$ is a linear map, the initial conditions can be localized in a ball with infinite radius. Taking this fact into account, we highlight that the first two global stabilities stated in Definition 3.2 for GLDEs are directly recovered from \cite[Definition 3.1]{GGM}, considering in this last mentioned notion the limit case of infinite radius. For that reason, the adjective global is additioned  in the linear context. Moreover, we emphasize that the global non uniform exponential stability stated in Definition~3.2 is a novel definition in the GLDEs framework but emulates the classical definition in the linear ODE context, namely, the contractive case of the non uniform exponential dichotomy,  see \cite{Barreira,Zhang}.
\end{remark}

We are in a position to establish the first stability results for the homogeneous GLDE \eqref{LhGODE}.

\begin{theorem}\label{US}
The trivial solution of the homogeneous GLDE \eqref{LhGODE} is uniformly stable if and only if the transition matrix $U(t,s_0)$ is bounded for all $t\geq s_0\geq0$. 
\end{theorem}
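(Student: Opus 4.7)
The plan is to exploit the explicit formula $x(t,s_0,x_0)=U(t,s_0)x_0$ furnished by Theorem~\ref{euhGLDE}, which reduces the stability question entirely to the operator norm of the transition matrix. The proof then splits into the two implications of the biconditional, and each is a short direct argument.

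For the sufficiency direction, I would assume $\|U(t,s_0)\|\le M$ for some constant $M>0$ and all $t\ge s_0\ge 0$. Given $\varepsilon>0$, I would pick $\delta=\varepsilon/M$, which does not depend on $s_0$. Then for any $x_0\in\rn$ with $\|x_0\|<\delta$, the estimate $\|x(t,s_0,x_0)\|=\|U(t,s_0)x_0\|\le\|U(t,s_0)\|\|x_0\|<M\delta=\varepsilon$ holds for every $t\ge s_0$, yielding uniform stability directly from Definition~\ref{stability}.

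For the necessity direction, I would use the uniform stability applied to a fixed $\varepsilon$, say $\varepsilon=1$, to obtain a single $\delta>0$ (independent of $s_0$) such that $\|x_0\|<\delta$ implies $\|U(t,s_0)x_0\|<1$ for all $t\ge s_0$. To extract a bound on the operator norm, I would use a homogeneity/rescaling trick: for an arbitrary nonzero $v\in\rn$, set $x_0=(\delta/2)v/\|v\|$, so that $\|x_0\|<\delta$ and hence $\|U(t,s_0)x_0\|<1$. Multiplying through by $2\|v\|/\delta$ gives $\|U(t,s_0)v\|<(2/\delta)\|v\|$, so that $\|U(t,s_0)\|\le 2/\delta$ uniformly in $t\ge s_0\ge 0$.

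No step here is genuinely hard; the only point deserving care is the quantifier structure in the necessity direction, where it is essential that the $\delta$ in the definition of uniform stability is \emph{independent} of $s_0$, since this is exactly what transfers into a uniform (in both $t$ and $s_0$) bound on $\|U(t,s_0)\|$. The argument is self-contained once Theorem~\ref{euhGLDE} is invoked, and no properties of the Kurzweil--Stieltjes integral or of the function $A$ beyond those already guaranteeing the existence of $U(t,s_0)$ are needed.
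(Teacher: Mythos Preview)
Your proposal is correct and follows essentially the same approach as the paper's own proof: both directions use the identity $x(t,s_0,x_0)=U(t,s_0)x_0$, the sufficiency direction chooses $\delta=\varepsilon/C$ (you write $M$ for the bound), and the necessity direction fixes $\varepsilon=1$ and rescales an arbitrary vector by $\delta/2$ to extract the uniform bound $\|U(t,s_0)\|\le 2/\delta$. The only cosmetic difference is that the paper tests unit vectors $z$ with $\|z\|\le 1$ directly, whereas you normalise an arbitrary nonzero $v$; the content is identical.
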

\begin{proof}
	Assume that the trivial solution of \eqref{LhGODE} is uniformly stable. Let $\varepsilon=1$. There exists $\delta=\delta(1)>0$ such that 
	\[
	\|x(t,s_0,x_0)\|=\|U(t,s_0)x_0\|<1,
	\]
	for all $t\geq s_0\geq0$, and $\|x_0\|<\delta$. Therefore, for an arbitrary $z\in\rn$ with $\|z\|\leq1$, we have $\|x(t,s_0,(\delta /2) z)\|=\|U(t,s_0)(\delta /2) z\|<1$ for all $t\geq s_0\geq0$. Hence, we deduce that 
	\[
	\|U(t,s_0)\|=\sup_{\|z\|\leq1}\|U(t,s_0)z\|\leq 2/\delta,
	\]
	for all $t\geq s_0\geq0$.
	
	On the other hand, suppose that there exists a positive constant $C>0$ such that $\|U(t,s_0)\|\leq C$ for all $t\geq s_0\geq0$. Given $\varepsilon>0$, we can take  $\delta=\varepsilon/C$, and it follows that 
	\[
	\|x(t,s_0,x_0)\|=\|U(t,s_0)x_0\|\leq\|U(t,s_0)\|\|x_0\|<\varepsilon,
	\]
	for all $t\geq s_0\geq0$, and $\|x_0\|<\delta$. Therefore, the trivial solution of \eqref{LhGODE} is uniformly stable. 
\end{proof}

In the previous Theorem~\ref{US}, the boundedness of the transition matrix is independent of the initial time $s_0\in J$. It is straightforward to see that the stability can be characterised with a bound depending on the initial time $s_0\in J$. Specifically, we can state the following result

\begin{corollary}
The trivial solution of the homogeneous GLDE \eqref{LhGODE} is stable if and only if for every $s_0\in J$, there is $C=C(s_0)>0$ such that $\|U(t,s_0)\|\leq C$ for all $t\geq s_0$.  
\end{corollary}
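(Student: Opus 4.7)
The plan is to mirror the proof of Theorem~\ref{US} almost verbatim, keeping track of the dependence on the initial time $s_0$. The key observation is that the argument in Theorem~\ref{US} uses uniform stability in exactly one place: to produce a $\delta$ that does not depend on $s_0$, yielding a uniform bound $\|U(t,s_0)\|\leq 2/\delta$. If we only assume stability, the $\delta$ we extract from the definition is allowed to depend on $s_0$, and the resulting bound $C(s_0)=2/\delta(s_0,1)$ naturally inherits that dependence.

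For the forward implication, I would fix an arbitrary $s_0\in J$, specialize the stability definition to $\varepsilon=1$, and obtain $\delta=\delta(s_0,1)>0$ such that $\|U(t,s_0)x_0\|<1$ for all $t\geq s_0$ whenever $\|x_0\|<\delta$. Then, as in the proof of Theorem~\ref{US}, I would pick an arbitrary $z\in\rn$ with $\|z\|\leq 1$, apply the above with $x_0=(\delta/2)z$, and conclude via the definition of the operator norm that $\|U(t,s_0)\|\leq 2/\delta$ for all $t\geq s_0$. Setting $C(s_0):=2/\delta(s_0,1)$ gives the desired bound.

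For the converse, I would fix $s_0\in J$, take the supplied constant $C=C(s_0)>0$, and for a given $\varepsilon>0$ define $\delta:=\varepsilon/C(s_0)$, which depends on both $s_0$ and $\varepsilon$. Then for any $x_0\in\rn$ with $\|x_0\|<\delta$, Theorem~\ref{euhGLDE} and the bound on the transition matrix yield $\|x(t,s_0,x_0)\|=\|U(t,s_0)x_0\|\leq C(s_0)\|x_0\|<\varepsilon$ for every $t\geq s_0$, which is precisely the stability of the trivial solution.

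There is essentially no genuine obstacle here, since the proof is a direct specialisation of the argument for Theorem~\ref{US}. The only subtlety worth stressing in the write-up is that $\delta$ (and hence $C$) is permitted to depend on $s_0$, which is exactly what distinguishes stability from uniform stability and explains why this is a corollary rather than the full theorem.
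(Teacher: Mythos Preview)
Your proposal is correct and follows exactly the approach the paper intends: the paper does not give an explicit proof of this corollary, merely remarking that it is ``straightforward'' from Theorem~\ref{US} by allowing the bound to depend on $s_0$, which is precisely what you do. Your write-up is complete and requires no changes.
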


In the next, we characterise the global asymptotic stability of the trivial solution.  

\begin{theorem}\label{GAS}
	The trivial solution of the homogeneous GLDE \eqref{LhGODE} is globally asymptotically stable if and only if for every $s_0\in J$, it follows that $\|U(t,s_0)\|\longrightarrow0$ as $t\to+\infty$.
\end{theorem}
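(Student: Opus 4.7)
The plan is to prove both implications directly using the representation $x(t,s_0,x_0)=U(t,s_0)x_0$ from Theorem~\ref{euhGLDE} and the properties of the transition matrix listed in Theorem~\ref{pU}.

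For the necessity $(\Rightarrow)$, I would argue column by column. Assuming global asymptotic stability, for each standard basis vector $e_j\in\rn$ the solution $x(t,s_0,e_j)=U(t,s_0)e_j$ must tend to zero as $t\to+\infty$; but $U(t,s_0)e_j$ is precisely the $j$-th column of $U(t,s_0)$, so every column tends to $0$, and since in finite dimension all matrix norms are equivalent, $\|U(t,s_0)\|\to 0$ as $t\to+\infty$. This gives the easy direction with no real work.

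For the sufficiency $(\Leftarrow)$, the target is to verify the two clauses of Definition~\ref{gstability}: stability at each $s_0\in J$, and pointwise decay of $x(t,s_0,x_0)$ for every $x_0\in\rn$. The pointwise decay is immediate from
\[
\|x(t,s_0,x_0)\|=\|U(t,s_0)x_0\|\le \|U(t,s_0)\|\,\|x_0\|\longrightarrow 0,
\]
so the only real step is to establish stability, for which by the Corollary following Theorem~\ref{US} it suffices to produce, for each fixed $s_0\in J$, a constant $C(s_0)>0$ with $\|U(t,s_0)\|\le C(s_0)$ for all $t\ge s_0$.

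The key step, and the one I expect to be the mild obstacle, is to bridge from ``eventual decay'' to ``global boundedness on $[s_0,+\infty)$'', since the hypothesis gives no uniform information near $s_0$. The idea is to split the half-line into a compact piece and a tail. Given $s_0$, the hypothesis $\|U(t,s_0)\|\to 0$ supplies a $T\ge s_0$ with $\|U(t,s_0)\|\le 1$ for every $t\ge T$. On the compact interval $[s_0,T]$, Theorem~\ref{pU}(b)(iii) provides a constant $M=M(s_0,T)$ with ${\rm var}_{s_0}^{T}(U(\cdot,s_0))\le M$; combining this with $U(s_0,s_0)=I$ from Theorem~\ref{pU}(a) yields
\[
\|U(t,s_0)\|\le \|U(s_0,s_0)\|+\|U(t,s_0)-U(s_0,s_0)\|\le 1+M,\qquad t\in[s_0,T].
\]
Setting $C(s_0):=1+M$ then gives $\|U(t,s_0)\|\le C(s_0)$ for all $t\ge s_0$, the Corollary yields stability at $s_0$, and combined with the pointwise decay noted above this completes the proof of global asymptotic stability.
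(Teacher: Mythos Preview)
Your proposal is correct and follows essentially the same approach as the paper: the forward direction exploits finite-dimensionality (you via the standard basis, the paper via the operator-norm supremum), and the backward direction splits $[s_0,+\infty)$ into a tail where the decay hypothesis gives a bound and a compact piece where the bounded-variation property of $U(\cdot,s_0)$ from Theorem~\ref{pU}(b) supplies the bound. The only cosmetic difference is that you invoke part~(b)(iii) together with $U(s_0,s_0)=I$ to bound $\|U(\cdot,s_0)\|$ on $[s_0,T]$, whereas the paper simply notes that a BV function is bounded; you could equally have cited part~(b)(i) directly.
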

\begin{proof}
Let $s_0\in J$ and assume that the trivial solution of the generalized linear ODE \eqref{LhGODE} is globally asymptotically stable. Then it follows that
\[
\|U(t,s_0)\|=\sup_{\|z\|\leq1}\|U(t,s_0)z\|=\sup_{\|z\|\leq1}\|x(t,s_0,z)\|\longrightarrow0, \qquad \text{ as } t\to+\infty.
\]
On the other hand, suppose that for every $s_0\in J$, $\|U(t,s_0)\|\longrightarrow0$ as $t\to+\infty$. We will prove the stability of the trivial solution. Let $\varepsilon>0$ be fixed. There exists $N\in\mathbb{R}$ such that $\|U(t,s_0)\|<\varepsilon$ for all $t\geq N$. In addition, since $U(\cdot,s_0):[s_0,N]\to\mathcal{L}(\rn)$ is a function of bounded variation, it follows that the function $\|U(\cdot,s_0)\|:[s_0,N]\to\real$ is of bounded variation on $[s_0,N]$. Hence, there exists a $K(\varepsilon,s_0)>0$ such that $\|U(t,s_0)\|<K(\varepsilon,s_0)$ for all $t\geq s_0$.

Now, if we choose $\delta=\varepsilon/K(\varepsilon,s_0)$, then we have
\[
\|x(t,s_0,x_0)\|=\|U(t,s_0)x_0\|=\|U(t,s_0)\|\|x_0\|<\varepsilon,
\]
for all $\|x_0\|<\delta$, and we conclude the stability. In addition, since $x(t,s_0,x_0)=U(t,s_0)x_0$, and $\|U(t,s_0)\|\longrightarrow0$ as $t\to+\infty$, it follows that $x(t,s_0,x_0)\longrightarrow0$ as $t\to+\infty$ (independent if $x_0$ is close to zero).
\end{proof}

The next result is a characterisation of the uniform asymptotic stability for the homogeneous GLDE \eqref{LhGODE} and proves that the uniform asymptotic stability in the linear case is equivalent to the global uniform exponential stability. 

\begin{theorem}\label{UAS}
	The trivial solution of the homogeneous GLDE \eqref{LhGODE} is uniformly asymptotically stable if and only if there exist positive constants $\alpha,K>0$ such that 
\begin{equation}\label{exp}
\|U(t,s_0)\|\leq Ke^{-\alpha(t-s_0)}
\end{equation}
	for all $t\geq s_0\geq0$.
\end{theorem}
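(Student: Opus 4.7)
\medskip

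\noindent\textbf{Proof proposal.} The plan is to prove both implications by exploiting the cocycle identity $U(t,s_0)=U(t,r)U(r,s_0)$ from Theorem~\ref{pU}(c), which plays the role of the semigroup property for nonautonomous classical linear systems.

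For the sufficiency (exponential bound implies uniform asymptotic stability), the exponential bound \eqref{exp} clearly implies $\|U(t,s_0)\|\leq K$ for all $t\geq s_0\geq 0$, so by Theorem~\ref{US} the trivial solution is uniformly stable. To get the second half of uniform asymptotic stability, I fix $\delta_0=1$ (any positive value works) and, given $\varepsilon>0$, choose $T(\varepsilon)=\max\{0,\alpha^{-1}\log(K/\varepsilon)\}$. Then for $\|x_0\|<\delta_0$ and $t\geq s_0+T(\varepsilon)$ one has $\|x(t,s_0,x_0)\|\leq Ke^{-\alpha(t-s_0)}\|x_0\|<\varepsilon$. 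This direction is essentially a calculation and should present no real difficulty.

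For the necessity, uniform stability and Theorem~\ref{US} give a constant $M>0$ with $\|U(t,s_0)\|\leq M$ for all $t\geq s_0\geq 0$. I will now use the ``attraction'' part of uniform asymptotic stability at a \emph{single} value of $\varepsilon$: fixing $\varepsilon=\delta_0/2$, there exists $T>0$ (independent of $s_0$) such that $\|U(t,s_0)x_0\|<\delta_0/2$ whenever $\|x_0\|<\delta_0$ and $t\geq s_0+T$. Taking the supremum over $\|x_0\|\leq \delta_0/2$ (up to the standard scaling argument used in the proof of Theorem~\ref{US}) yields $\|U(s_0+T,s_0)\|\leq 1/2$ for every $s_0\geq 0$. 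This is the crucial step and the main obstacle of the proof: extracting a \emph{time-independent} contraction factor on a window of fixed length $T$ from the attraction property.

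Once the uniform contraction on windows of length $T$ is available, I iterate using the cocycle identity: for any integer $n\geq 1$,
\[
U(s_0+nT,s_0)=U(s_0+nT,s_0+(n-1)T)\cdots U(s_0+T,s_0),
\]
whence $\|U(s_0+nT,s_0)\|\leq 2^{-n}$. For arbitrary $t\geq s_0$, write $t-s_0=nT+r$ with $0\leq r<T$ and split $U(t,s_0)=U(t,s_0+nT)U(s_0+nT,s_0)$, using the uniform bound $\|U(t,s_0+nT)\|\leq M$ on the leftover piece of length $r<T$. Since $n\geq(t-s_0)/T-1$, this gives
\[
\|U(t,s_0)\|\leq M\cdot 2^{-n}\leq 2M\,e^{-\alpha(t-s_0)},\qquad \alpha:=\frac{\log 2}{T},
\]
which is the desired estimate with $K:=2M$. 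Combined with the sufficiency argument, this establishes the equivalence and simultaneously shows that uniform asymptotic stability coincides with global uniform exponential stability in the sense of Definition~\ref{gstability}.
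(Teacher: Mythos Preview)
Your proposal is correct and follows essentially the same route as the paper: both directions use Theorem~\ref{US} for the uniform bound, extract from the attraction hypothesis a time-independent contraction of $\|U(\cdot,\cdot)\|$ over windows of fixed length $T$, and then iterate via the cocycle identity $U(t,s_0)=U(t,r)U(r,s_0)$ to obtain the exponential estimate. The only cosmetic differences are that the paper works with a generic $\varepsilon\in(0,1)$ (obtaining contraction factor $\varepsilon$) whereas you specialise to $\varepsilon=\delta_0/2$, and the paper discretises with the ceiling $N$ while you use the floor $n$; one small caution is that the literal scaling from Theorem~\ref{US} (testing with $x_0=(\delta_0/2)z$) only yields $\|U(s_0+T,s_0)\|\leq 1$, so to reach $1/2$ you should either let the test radius tend to $\delta_0^-$ or pick $\varepsilon=\delta_0/4$ instead.
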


\begin{proof}
	Assume that there exist positive constants $\alpha,K>0$ such that \eqref{exp} holds for all $t\geq s\geq0$. Clearly, the transition matrix is bounded, with $\|U(t,s_0)\|\leq K$, for all $t\geq s_0\geq0$. Hence, by Theorem~\ref{US}, the trivial solution of the linear generalized ODE \eqref{LhGODE} is uniformly stable. 
	
	Let $\varepsilon>0$ be fixed. We can always assume that $K>\varepsilon$. Consider $x_0\in\rn$ and $s_0\geq0$. Note that we have the following equivalence
	\[
	 Ke^{-\alpha(t-s_{0})}<\varepsilon\; \Longleftrightarrow \; t>s_{0}+\dfrac{\ln(K/\varepsilon)}{\alpha}.
	\]  
	Therefore, considering $\delta_{0}=1$ and $T(\varepsilon)=\dfrac{\ln(K/\varepsilon)}{\alpha}$, we obtain that
	\begin{equation*}
	\|x(t,s_{0},x_0)\|=\|U(t,s_0)x_0\|\leq \|x_0\|Ke^{-\alpha(t-s_{0})}<\varepsilon,
	\end{equation*}
	for all $t\geq s_0+T(\varepsilon)$. Hence, we conclude that the trivial solution of the generalized ODE \eqref{LhGODE} is uniformly asymptotically stable. 
	
	On the other hand, suppose that the trivial solution of \eqref{LhGODE} is uniformly asymptotically stable. Let $0<\varepsilon<1$ be fixed. There exists $\delta_0>0$ independent of $\varepsilon$, and there is $T=T(\varepsilon)>0$, such that 
	\[
	\|U(t,s_0)x_0\|<\dfrac{\varepsilon\delta_{0}}{2},
	\]  
for all $t\geq s_{0} + T$ and $\|x_0\|<\delta_0$. Now, for an arbitrary $z\in\rn$ with $\|z\|\leq1$ we obtain $\|U(t,s_0)(\delta_{0}/2)z\|<(\varepsilon\delta_{0}/2)$ for all $t\geq s_{0} + T$, which implies that $\|U(t,s_0)z\|<\varepsilon$ for all $t\geq s_{0} + T$. Hence, 
\begin{equation}\label{eq1}
\|U(t,s_0)\|<\varepsilon,
\end{equation}
for all $t\geq s_{0} + T$. 

From the uniform stability assumption of \eqref{LhGODE} and Theorem~\ref{US}, we deduce the existence of a positive constant $C>0$ such that $\|U(t,s_0)\|\leq C$ for all $t\geq s_0\geq0$.

Consider  $t\geq s_0\geq0$, and let $N\geq1$ be the smallest integer such that $t\leq s_0+NT$. Then, by the transition property of the transition matrix we have the following decomposition
\[
U(t,s_0)=U(t,s_0+(N-1)T)U(s_0+(N-1)T,s_0+(N-2)T)\cdots U(s_0+T,s_0).
\]  

By \eqref{eq1} and the previous equality we obtain
\begin{equation}\label{eq2}
\|U(t,s_0)\|\leq \|U(t,s_0+(N-1)T)\|\varepsilon^{N-1}\leq C\varepsilon^{N-1}, \qquad \text{for all } t\geq s_{0}.
\end{equation}
Consider an arbitrary $\alpha>0$. In the case that $N=1$, i.e. $s_0<t\leq s_0+T$, we have 
\[
\|U(t,s_0)\|\leq C=Ce^{\alpha(t-s_0)}e^{-\alpha(t-s_0)}\leq Ce^{\alpha T}e^{-\alpha(t-s_0)}=Ke^{-\alpha(t-s_0)},
\]
where $K=Ce^{\alpha T}$. In addition, in the case that $N>1$, for $s_0+(N-1)T\leq t\leq s_0+NT$, we deduce
\begin{equation}\label{eq3}
e^{-\alpha(t-s_0)}e^{\alpha T}\geq e^{-\alpha(N-1)T}.
\end{equation}

Therefore, if we take $\alpha=-\dfrac{\ln(\varepsilon)}{T}>0$, then from \eqref{eq2} and \eqref{eq3} it follows that  
\begin{eqnarray}
\|U(t,s_0)\|&\leq Ce^{-\alpha(N-1)T}\leq Ce^{\alpha T}e^{-\alpha(t-s_0)}=Ke^{-\alpha(t-s_0)}.
\end{eqnarray}

Gathering all the previous observations, for $\alpha=-\dfrac{\ln(\varepsilon)}{T}>0$ and $K=Ce^{\alpha T}>0$, we obtain
\[
\|U(t,s_0)\|\leq Ke^{-\alpha(t-s_0)}, \qquad \text{for all } t\geq s_0\geq0,
\]
getting the desired result.
\end{proof}

\begin{remark}
\label{rUAS}
	Theorem~\ref{UAS} ensures that the uniform asymptotic stability is equivalent to the global uniform exponential stability. In fact, if the trivial solution of the homogeneous GLDE \eqref{LhGODE} is uniformly asymptotically stable, then there exist positive constants $K,\alpha>0$ satisfying \eqref{exp}. Then, for an arbitrary $s_0\geq0$ and $x_0\in\rn$ we get
	\[
	\|x(t,s_0,x_0)\|=\|U(t,s_0)x_0\|\leq K\|x_0\|e^{-\alpha(t-s_0)}, \qquad \text{for all $t\geq s_0$},
	\]
	and the global uniform exponential stability follows. The other implication follows easily from the second paragraph of the proof of Theorem~\ref{UAS}. It is important to emphasize that this equivalence is well known in the classic context \cite[Th. 58.7]{Hahn}. Nevertheless, to the best of our knowledge, it has not been verified for the homogeneous GLDE \eqref{LhGODE}.   
\end{remark}
In the next, we recall a result of the Floquet theory which has been generalized to the context of GLDEs, see \cite{SchwF} and \cite{SCHWABIK3}. 
\begin{theorem}\label{Floquet1}
	Assume that $A\in BV_{loc}(J,\mathcal{L}(\rn))$, condition {\rm (D)} holds and $A(t+\omega)-A(t)=C$, for all $t\in J$, where $\omega>0$ and $C\in\mathcal{L}(\rn)$ is a constant matrix. Then for the fundamental matrix $X(t)=U(t,0)$ associated to the homogeneous GLDE \eqref{LhGODE}, there exist a $n\times n$ - matrix valued function $P\colon J \to \mathcal{L}(\rn)$ which is $\omega$-periodic and a constant matrix $Q\in\mathcal{L}(\rn)$ such that
	\begin{equation}
	\label{MFF}
	X(t)=P(t)e^{Qt},\qquad \text{ for all $t\in J$},
	\end{equation}
	where $Q=\frac{1}{\omega}\ln\left(X(\omega)\right)$.  
\end{theorem}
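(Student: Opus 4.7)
The plan is to mimic the classical Floquet argument but carry out the step ``$X(t+\omega)$ is again a fundamental matrix of the same equation'' at the level of the Kurzweil--Stieltjes integral equation \eqref{FO} rather than via a pointwise differential identity, so that only the integral and the uniqueness statement from Theorem~\ref{pU} are needed. Concretely, I would first prove the \emph{monodromy relation}
\[
X(t+\omega)=X(t)X(\omega)\qquad \text{for all }t\in J,
\]
then define $Q=\tfrac{1}{\omega}\ln X(\omega)$ (using that $X(\omega)=U(\omega,0)$ is invertible by Theorem~\ref{pU}(d)), and finally check that $P(t):=X(t)e^{-Qt}$ is $\omega$-periodic.

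For the monodromy relation, set $Y(t):=X(t+\omega)$ and $Z(t):=X(t)X(\omega)$. From the identity \eqref{FO} with $s=0$,
\[
Y(t)=I+\int_{0}^{t+\omega}\mathrm{d}[A(s)]X(s)=X(\omega)+\int_{\omega}^{t+\omega}\mathrm{d}[A(s)]X(s),
\]
and after the substitution $s=u+\omega$ (justified by the translation invariance of the Kurzweil--Stieltjes integral) the hypothesis $A(u+\omega)-A(u)=C$ makes $\mathrm{d}[A(u+\omega)]=\mathrm{d}[A(u)]$, because the constant $C$ does not contribute to the Stieltjes differential. Hence
\[
Y(t)=X(\omega)+\int_{0}^{t}\mathrm{d}[A(u)]\,Y(u).
\]
On the other hand, right-multiplying \eqref{FO} by $X(\omega)$ yields immediately
\[
Z(t)=X(\omega)+\int_{0}^{t}\mathrm{d}[A(u)]\,Z(u),
\]
so $Y$ and $Z$ solve the same Volterra--Stieltjes equation with the same initial value $Y(0)=Z(0)=X(\omega)$. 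The uniqueness part of Theorem~\ref{pU} (applied column-by-column, or equivalently the uniqueness from Theorem~\ref{euhGLDE}) then forces $Y\equiv Z$, which is the monodromy relation.

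Once this is available, set $Q:=\tfrac{1}{\omega}\ln X(\omega)$ so that $e^{Q\omega}=X(\omega)$, and put $P(t):=X(t)e^{-Qt}$. A direct computation using the monodromy relation gives
\[
P(t+\omega)=X(t+\omega)e^{-Q(t+\omega)}=X(t)X(\omega)e^{-Q\omega}e^{-Qt}=X(t)e^{-Qt}=P(t),
\]
so $P$ is $\omega$-periodic and \eqref{MFF} follows by rearranging. The expected principal obstacle is twofold: first, one must justify the change of variable and the equality $\mathrm{d}[A(\cdot+\omega)]=\mathrm{d}[A(\cdot)]$ carefully in the Perron--Stieltjes setting (this is really the content that distinguishes the GLDE case from the smooth periodic ODE case); second, the matrix logarithm of $X(\omega)$ need not be real in general, so either one works over $\mathbb{C}$ or, as in the classical real version, replaces $\omega$ by $2\omega$ to guarantee a real $Q$ — the statement as written implicitly allows $Q\in\mathcal{L}(\mathbb{C}^n)$, and I would note this convention explicitly rather than try to improve it.
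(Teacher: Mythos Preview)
The paper does not give its own proof of this theorem: it is stated as a recalled result of Schwabik's Floquet theory for GLDEs, with the proof deferred to \cite{SchwF} and \cite[Th.~3.2, p.~124]{SCHWABIK3} (see the sentence preceding the statement and the remark immediately after it). There is therefore nothing in the present paper to compare your argument against.

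That said, your proposal is the standard Floquet argument transported to the Kurzweil--Stieltjes setting, and it is essentially the route Schwabik himself takes in the cited references. The two steps you single out as needing care are exactly the ones that carry the content: (i) the change of variable $s=u+\omega$ in the Perron--Stieltjes integral is legitimate because Definition~\ref{Kint} is built from tagged partitions, which are manifestly translation-covariant, and the hypothesis $A(u+\omega)=A(u)+C$ with $C$ constant means the integrator differences $[A(\alpha_j+\omega)-A(\alpha_{j-1}+\omega)]$ coincide with $[A(\alpha_j)-A(\alpha_{j-1})]$, so $\mathrm{d}[A(\cdot+\omega)]=\mathrm{d}[A(\cdot)]$ at the level of Riemann--Stieltjes sums; (ii) the matrix logarithm of $X(\omega)$ exists because $X(\omega)=U(\omega,0)$ is invertible by Theorem~\ref{pU}(d), but may require complex entries, and your remark about either working over $\mathbb{C}$ or doubling the period is the correct caveat. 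The uniqueness appeal (to Theorem~\ref{pU} or, column-by-column, to Theorem~\ref{euhGLDE}) to conclude $Y\equiv Z$ is valid. Nothing is missing.
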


\begin{remark}
	Theorem~\ref{Floquet1} can be stated on the whole real axis if we interchange -- in the enunciate -- the interval $J$ by $\real$ and consider the condition (D) as follows: the matrices $[I-\Delta^{-}A(t)]$ and $[I+\Delta^{+}A(t)]$ are invertible for all $t\in\real$. The proof of the previous assertion can be found in \cite[Th.3.2, p.124]{SCHWABIK3}. 
\end{remark}

Similarly as in the classical Floquet theory, the stability of a $\omega$--periodic homogeneous GLDE \eqref{LhGODE} can be studied in terms of the monodromy matrix $X(\omega)$.

\begin{corollary}\label{Floq}
Assume the hypotheses of Theorem \ref{Floquet1}.  The trivial solution of the homogeneous GLDE \eqref{LhGODE} is globally uniformly exponentially stable if and only if the eigenvalues of the monodromy matrix $X(\omega)$
are inside of the unit circle.
\end{corollary}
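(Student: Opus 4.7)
The plan is to translate the stability question for the transition matrix $U(t,s)$ into a spectral condition for the constant matrix $Q$ arising in the Floquet representation, and then read off the latter from $X(\omega)$. Recall from Remark~\ref{MT-MF} that the transition matrix factors as
\[
U(t,s)=X(t)X^{-1}(s)=P(t)e^{Q(t-s)}P^{-1}(s),
\]
where $P$ is $\omega$-periodic and $Q=\frac{1}{\omega}\ln(X(\omega))$. The first step I would take is to show that both $P$ and $P^{-1}$ are bounded on $J$. By Theorem~\ref{pU}(b), $X(t)=U(t,0)$ and $X^{-1}(t)=U(0,t)$ are of bounded variation on $[0,\omega]$, hence bounded there; since $e^{\pm Qt}$ is continuous, the factors $P(t)=X(t)e^{-Qt}$ and $P^{-1}(t)=e^{Qt}X^{-1}(t)$ are bounded on $[0,\omega]$, and $\omega$-periodicity extends the bound to all of $J$. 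Let $M>0$ denote a common bound.

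Next, I would use the standard spectral correspondence: the eigenvalues $\mu_j$ of $X(\omega)=e^{Q\omega}$ and the eigenvalues $\lambda_j$ of $Q$ satisfy $\mu_j=e^{\omega\lambda_j}$, so $|\mu_j|<1$ if and only if $\mathrm{Re}(\lambda_j)<0$. For the \emph{sufficiency} direction, assume every $|\mu_j|<1$. Then every $\mathrm{Re}(\lambda_j)<0$, and a classical estimate for matrix exponentials gives constants $\widetilde K,\alpha>0$ with $\|e^{Q\tau}\|\leq \widetilde K e^{-\alpha\tau}$ for all $\tau\geq 0$. Combining with the Floquet factorization yields
\[
\|U(t,s)\|\leq M^{2}\widetilde K\, e^{-\alpha(t-s)}, \qquad t\geq s\geq 0,
\]
which, by Theorem~\ref{UAS} and Remark~\ref{rUAS}, is exactly the global uniform exponential stability of the trivial solution.

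For the \emph{necessity} direction, suppose global uniform exponential stability holds. Then $\|U(t,0)\|\leq K e^{-\alpha t}$ for some $K,\alpha>0$. Evaluating at $t=n\omega$ and using $P(n\omega)=P(0)$ gives
\[
\|P(0)e^{Qn\omega}P^{-1}(0)\|=\|U(n\omega,0)\|\leq Ke^{-\alpha n\omega}, \qquad n\in\mathbb{N},
\]
hence $\|(e^{Q\omega})^{n}\|\to 0$ as $n\to\infty$. By Gelfand's formula, the spectral radius of $e^{Q\omega}=X(\omega)$ is strictly less than $1$, so every eigenvalue of $X(\omega)$ lies inside the unit circle.

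The main technical obstacle is verifying the boundedness of $P^{-1}$ on a period, because $X$ is only of locally bounded variation (and generally has jumps), so one cannot invoke continuity. This is handled cleanly by identifying $X^{-1}(t)$ with $U(0,t)$ via Theorem~\ref{pU}(d) and then using the local bounded variation of $U(0,\cdot)$ guaranteed by Theorem~\ref{pU}(b). Once this bound is in hand, the rest of the argument is a direct transcription of the classical Floquet stability criterion into the GLDE setting.
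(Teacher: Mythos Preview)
Your proof is correct and follows essentially the same route as the paper's: both directions rely on the Floquet factorization $U(t,s)=P(t)e^{Q(t-s)}P^{-1}(s)$, the uniform boundedness of $P$ and $P^{-1}$ obtained from Theorem~\ref{pU}(b) together with $\omega$-periodicity, and the spectral correspondence $\mu_j=e^{\omega\lambda_j}$ between $X(\omega)$ and $Q$. The only minor deviation is in the necessity direction: the paper argues by contradiction with a single eigenvector of $X(\omega)$ (using $U(k\omega,0)x_0=\rho^k x_0$), whereas you pass through Gelfand's formula applied to $X(\omega)^n=U(n\omega,0)$; both arguments are equivalent and equally short.
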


\begin{proof}
Suppose that every eigenvalue $\rho$ of $X(\omega)$ verifies $|\rho|<1$. We will prove that for any $t\geq s_{0}\geq 0$, the inequality \eqref{exp} holds. Note that as $X(\omega)$ is invertible, it is easy to deduce from \eqref{MFF} that for every $t\in J$ the matrix $P(t)$ is invertible. Now, from Remark \ref{MT-MF}, we have that
\[
U(t,s_{0})=P(t)e^{Q(t-s_{0})}P^{-1}(s_{0}), \quad \textnormal{for any $t\geq s_{0}\geq0$},
\]
where $P^{-1}\colon J\to \mathcal{L}(\rn)$ is also $\omega$--periodic. In fact, note that
$P(t)P^{-1}(t)=P(t+\omega)P^{-1}(t)=I$ leads to $P^{-1}(t)=P^{-1}(t+\omega)$ and the $\omega$--periodicity follows. 

By \eqref{MFF} we have that $P(t)=X(t)e^{-Qt}$. Now, by using Remark \ref{MT-MF} and statement (b) from Theorem \ref{pU} we deduce the existence of
a constant $M\geq 0$ such that for any $t\in [0,\omega]$:
\begin{displaymath}
\begin{array}{rcl}
\|P(t)\| &\leq &  \|U(t,0)\| \|e^{-Qt}\| \\
&\leq & M\,\max\limits_{t\in [0,\omega]}\|e^{-Qt}\|,
\end{array}
\end{displaymath}
and we conclude the existence of $M_{1}>0$ and $M_{2}>0$ such that
$$
\sup\limits_{t\in [0,\omega]}\|P(t)\|\leq M_{1} \quad \textnormal{and} \quad
\sup\limits_{t\in [0,\omega]}\|P^{-1}(t)\|\leq M_{2}, 
$$
where the second inequality can be obtained in a similar way. Now, the above estimations and the 
$\omega$--periodicity of $P$ and $P^{-1}$ implies that
\begin{equation}
\label{EF1}
\begin{array}{rcl}
\|U(t,s_{0})\| &\leq & \|P(t)\|\,\|P^{-1}(s_{0})\|\,   \|e^{Q(t-s_{0})}\|\\
           &\leq &  K_{0} \, \|e^{Q(t-s_{0})}\|,     
\end{array}
\end{equation}
for all $t\geq s_{0}\geq 0$, where $K_{0}=M_{1}M_{2}$. In addition, note that every eigenvalue $\lambda_{k}$ of $Q$, 
has the form
\[
\lambda_{k}=\frac{1}{\omega}[\ln|\rho_{k}|+i\,(\arg \rho_{k}+2m\pi)] \quad 
\textnormal{with $m\in \mathbb{Z}$},
\]
where $\rho_{k}$ is an eigenvalue of the matrix $X(\omega)$, we refer the reader
to \cite[p.20]{Adrianova} for details. Moreover, as all the eigenvalues of $X(\omega)$
are inside the unit circle, from \cite[Th.1.9.2]{Perko} we infer the existence of positive constants $\alpha>0$ and $K_{1}>0$ such that $\|e^{Q(t-s_{0})}\|\leq K_{1}e^{-\alpha(t-s_{0})}$ for all $t\geq s_{0}\geq 0$. This fact combined with \eqref{EF1} leads to the estimate
\[
\|U(t,s_{0})\|\leq Ke^{-\alpha(t-s_{0})} \quad \textnormal{for any $t\geq s_{0}\geq 0$},
\]
where $K=K_{0}K_{1}$. Finally, gathering Theorem \ref{UAS} and Remark \ref{rUAS} we conclude that the trivial solution of the homogeneous GLDE \eqref{LhGODE} is globally uniformly exponentially stable.

On the other hand, suppose that the homogeneous GLDE \eqref{LhGODE} is globally uniformly exponentially stable. We will prove that any eigenvalue $\rho$ of
$X(\omega)$ belongs to the interior of the unit circle. By contradiction, assume that there exists an eigenvalue $\rho$ satisfying $|\rho|\geq 1$ with
$$
U(\omega,0)x_{0}=\rho\,x_{0},
$$
where $x_{0}$ is an eigenvector associated to $\rho$. Then, it is straightforward to prove that
$$
U(k\omega,0)x_{0}=\rho^{k}x_{0}  \quad \textnormal{for any $k\in \mathbb{N}$},
$$
since $U(t+\omega,s+\omega)=U(t,s)$ for all $t,s\geq 0$. The above identity combined with \eqref{exp} leads to
$$
|\rho|^{k} \|x_{0}\|=\|U(k\omega,0)x_{0}\|\leq Ke^{-\alpha k\omega}\|x_{0}\|,
$$
for some $K>0$ and $\alpha>0$. As $|\rho|\geq 1$, a contradiction is obtained letting $k\to \infty$.
\end{proof}

\section{Applications to scalar impulsive equations}
A noteworthy result of Schwabik \cite[pp.193--196]{SCHWABIK1} establishes that every linear impulsive diffe\-rential equation can be written as a particular homogeneous GLDE \eqref{LhGODE} provided some mild conditions. In this section, we will consider the scalar case in order to study the stability properties of the impulsive differential equation
\begin{equation}
\label{IDE}
\left\{\begin{array}{l}
\dot{x}=a(t)x, \qquad \textnormal{for $t\neq \tau_{k}$}\\\\
\Delta^{+}x(\tau_{k})=b_{k}x(\tau_{k}),
\end{array}\right.    
\end{equation}
where $t\geq 0$, the function $a\colon [0,+\infty)\to\real$ is locally Lebesgue integrable on $[0,+\infty)$, $\{\tau_{k}\}_{k\in\mathbb{N}}$ is a divergent sequence without cluster points, and the real sequence $\{b_{k}\}_{k\in\mathbb{N}}$ verifies $1+b_{k}\neq 0$ for all $k\in\mathbb{N}$.

As above mentioned, the impulsive differential equation \eqref{IDE} can be written as a scalar homogeneous GLDE 
\begin{equation}
\label{sGLDE} 
\frac{dx}{d\tau}=D[A(t)x],
\end{equation}
by considering the function $A\colon[0,+\infty)\to\real$ defined by 
\begin{equation}\label{A}
A(t)=\int_{0}^{t}a(r)\,dr+\sum\limits_{k=1}^{\infty}b_{k}H_{\tau_{k}}(t),\qquad \text{for all } t\geq0,
\end{equation}
where $H_{\tau_{k}}(\cdot)$ is defined by $H_{\tau_{k}}(t)=H(t-\tau_{k})$ for all $t\geq0$, and $H(\cdot)$ denotes the Heaviside function. Note that the function $A(\cdot)$ is continuous from the left, locally of bounded variation on $[0,+\infty)$, and satisfies condition (D) from Section~2. Therefore, the unique forward solution of the homogeneous GLDE (\ref{sGLDE}) passing through $x_{0}$ at time $t=s_{0}$ has the form
$x(t,s_{0},x_0)=U(t,s_{0})x_{0}$, for all $t\geq s_{0}\geq 0$, where $U(t,s_{0})$ is the transition matrix described by Theorem~\ref{pU}. 

Following the techniques employed in \cite[pp.195-196]{SCHWABIK1}, we can state that for $t,s\geq0$ the transition matrix for the homogeneous scalar GLDE \eqref{sGLDE} has the form
\begin{equation}
\label{MT}
U(t,s)=\left\{\begin{array}{ccl}
\Phi(t,s) &,&  \tau_{j}<t,s\leq \tau_{j+1},\\
\Phi(t,s)\prod\limits_{k=j+1}^{i}(1+b_{k}) 
&, &
\tau_{j}<s\leq \tau_{j+1}\leq \tau_{i}<t\leq\tau_{i+1},
\\
\Phi(t,s)\prod\limits_{k=j+1}^{i}\left(\frac{1}{1+b_{k}}\right)
&, &
\tau_{j}<t\leq \tau_{j+1}\leq \tau_{i}<s\leq\tau_{i+1},

\end{array}\right.
\end{equation}
where  $\Phi(t,s)=e^{\int_{s}^{t}a(\tau)\,d\tau}$ denotes the transition matrix associated to the continuous part of the impulsive differential equation \eqref{IDE}.

On the other hand, note that the transition matrix \eqref{MT} coincides with the transition matrix corresponding to the impulsive equation \eqref{IDE} which is well known for the classical theory of impulsive linear systems developed
in the pioneering works of \cite{Bainov,Halanay,Lakshmikantham,Samoilenko}. Hence, both the impulsive differential equation \eqref{IDE} and the scalar homogeneous GLDE \eqref{sGLDE} have the same transition matrix, which in turn implies that its solutions passing through $x_0$ at time $s_0$ are the same and are given by 
\[
x(t,s_0,x_0)=U(t,s_0)x_0, \qquad \text{ for all $t\geq s_0\geq0$.}
\]

The stability notions described in Definitions \ref{stability} and \ref{gstability} for homogeneous GLDEs
have some similarities with the stability theory for linear impulsive 
equations. In fact, the concepts of stability, uniform stability and global asymptotic stability of GLDEs are equivalent to those for impulsive systems stated in \cite[p.35]{Milev} and \cite[pp.56--57]{Samoilenko} respectively. Moreover, the stability theory of linear impulsive systems allows to classify several types of global asymptotic stabilities that can be described as a particular case of dichotomies with identity projector. A definition tailored to the scalar impulsive case \eqref{IDE} --adapted from \cite[Def.2.2]{Zhang}-- is the following:
\begin{definition}
\label{NUC}
The scalar impulsive differential equation \eqref{IDE}:
\begin{itemize}
\item[(a)] Has a {\it uniform contraction} if there exist 
a couple $(K,\alpha)$ of positive constants and an increasing function $h\colon [0,+\infty)\to [1,+\infty)$ with
$h(0)=1$ and $\lim\limits_{t\to +\infty}h(t)=+\infty$ such that
\begin{displaymath}
|U(t,s)|\leq K\left(\frac{h(t)}{h(s)}\right)^{-\alpha} \quad \textnormal{for any $t\geq s\geq 0$}.
\end{displaymath}
\item[(b)] Has a {\it non uniform contraction} if there exist three positive constants $\{K,\alpha,\varepsilon\}$ and two increasing functions $h,\mu \colon [0,+\infty)\to [1,+\infty)$ with
$h(0)=\mu(0)=1$ and $\lim\limits_{t\to +\infty}h(t)=\lim\limits_{t\to +\infty}\mu(t)=+\infty$ such that
\begin{displaymath}
|U(t,s)|\leq K\left(\frac{h(t)}{h(s)}\right)^{-\alpha}\mu(s)^{\varepsilon} \quad \textnormal{for any $t\geq s\geq 0$}.
\end{displaymath}
\end{itemize}
\end{definition}

In the next, we illustrate with simple examples our stability results for the homogeneous GLDE \eqref{sGLDE}  established in the previous section and we compare them with the stability notions in the impulsive framework.

\begin{example}\label{exUS}
If we assume that the function $a(\cdot)$ is Lebesgue integrable on $[0,+\infty)$ and the sequence $\{b_{k}\}_{k\in\mathbb{N}}$ is such that the series $\sum\limits_{k=1}^{\infty}\ln(|1+b_{k}|)$ is absolutely convergent, then the origin of the homogeneous GLDE \eqref{sGLDE} is uniformly stable. In fact, the integrability of the function $a\colon[0,+\infty)\to\real$ and the absolute convergence of the series imply the existence of  constants $L_{1}\in\real$ and $L_{2}>0$ such that
\[
\displaystyle \Phi(t,s)=e^{\int_{s}^{t}a(\tau)\,d\tau}\leq e^{L_{1}} \quad \textnormal{and} \quad
\prod\limits_{k=j+1}^{i}|1+b_{k}|\leq e^{\sum\limits_{k=1}^{\infty}|\ln(|1+b_{k}|)|}\leq e^{L_{2}}.
\]

By using \eqref{MT} we obtain that $|U(t,s)|\leq e^{L_{1}+L_{2}}$ for all $t\geq s\geq 0$ and the uniform stability of the null solution of (\ref{sGLDE}) follows from Theorem \ref{US}.
\end{example}

As we mentioned above, the definition
of the uniform stability is the same for 
GLDEs and linear impulsive equations, which is determined by the uniform boundedness of the transition matrix.
Hence, the origin is also a uniformly stable solution for the impulsive equation (\ref{IDE}).

Now, we provide an example of global (non uniform) asymptotic stability for the homogeneous GLDE \eqref{sGLDE}.
\begin{example}\label{ex2}
	Consider the function $a:[0,+\infty)\to\real$ defined by $a(t)=-\omega  -ct\sin(t)$, for all $t\geq0$,
	with $0<c<\omega$, and assume that the series
	$\sum\limits_{k=1}^{\infty}\ln(|1+b_{k}|)$ is absolutely convergent. Then the origin of \eqref{sGLDE} is globally asymptotically stable. In fact, it follows from \cite[Proposition 2.3]{Barreira} that the 
	assumption $0<c<\omega$ implies the existence of a positive constant $K_{0}>0$ such that
	\[
	\Phi(t,s)=e^{\int_{s}^{t}a(\tau)\,d\tau}\leq K_{0}e^{-(\omega-c)(t-s)}e^{2cs}, \qquad \textnormal{for $t\geq s\geq 0$}.
	\]
	Therefore, using \eqref{MT} and the absolute convergence of the series as in Example~\ref{exUS}, we obtain a positive constant $K>0$ such that
\begin{equation}\label{eqexgas}
|U(t,s)|\leq Ke^{-(\omega-c)(t-s)+2cs}=K(s)e^{-(\omega-c)(t-s)},  \qquad \textnormal{for $t\geq s\geq 0$}.
\end{equation}
In consequence, by Theorem \ref{GAS} we have that the origin of the homogeneous GLDE \eqref{sGLDE} is globally asymptotically stable. Moreover, since $\omega>c$, this global stability corresponds to the specific case of global non uniform exponential stability
stated in Definition \ref{gstability}.

\end{example}
In Example \ref{ex2}, the definition
of global non uniform exponential stability for GLDEs can be seen as a specific case of non uniform contraction
of the impulsive differential equation \eqref{IDE} with functions $h,\mu\colon [0,+\infty)\to [1,+\infty)$ defined by $h(t)=e^{t}$ and $\mu(t)=e^{2t}$, while the corresponding positive constants are given by $\alpha=\omega-c$ and $\varepsilon=c$.

The last two examples will illustrate the uniform asymptotic stability of the trivial solution for the homogeneous GLDE \eqref{sGLDE}.

\begin{example}\label{exUAS}
Let  $a(\cdot)$ be a bounded and piecewise continuous function verifying the average condition
\begin{equation}\label{average}
\limsup\limits_{t-s\to +\infty}\frac{1}{t-s}\int_{s}^{t}a(\tau)\,d\tau \leq -\alpha, \quad \textnormal{for $t>s$},
\end{equation}
where $\alpha>0$. Assume further that the series
$\sum\limits_{k=1}^{\infty}\ln(|1+b_{k}|)$ is absolutely convergent. Then the origin of the homogeneous GLDE \eqref{sGLDE} is uniformly asymptotically stable. In fact, the average condition \eqref{average} means that the upper Bohl exponent
of the equation $\dot{x}=a(t)x$ is smaller than $-\alpha<0$ (see \cite{Baravanov}, \cite[pp.258--259]{Hinrichsen} for details), which in turn implies the existence of $K_{0}>0$ such that
\[
\displaystyle \Phi(t,s)=e^{\int_{s}^{t}a(\tau)\,d\tau}\leq K_{0}e^{-\alpha(t-s)} \quad \textnormal{for all $t\geq s\geq 0$}. 
\]

In addition, as in the previous Example~\ref{exUS}, by the absolute convergence of the series we get an estimation for the product $\prod\limits_{k=j+1}^{i}|1+b_{k}|$. Now,  using \eqref{MT} and the above estimations, we deduce the existence of a positive constant $K>0$ such that
$|U(t,s)|\leq Ke^{-\alpha(t-s)}$ for all $t\geq s\geq 0$. Hence, the uniform asymptotic stability follows from Theorem \ref{UAS}. 

\end{example}

Let us emphasise that Example~\ref{exUAS} is also an example of global uniform exponential stability for the trivial solution of the homogeneous GLDE \eqref{sGLDE}. In fact, for an arbitrary $x_0\in\real$ and $s_0\geq0$ we get 
\[
|x(t,s_0,x_0)|=|U(t,s_0)x_0|\leq |x_0|Ke^{-\alpha(t-s_0)}, \qquad \text{for all } t\geq s_0.
\]

Moreover, the definition
of global uniform exponential stability for GLDEs can be regarded as a particular case of uniform contraction
of the impulsive differential equation \eqref{IDE} with function $h\colon [0,+\infty)\to [1,+\infty)$ defined by $h(t)=e^{t}$.

In the above examples, the asymptotic behaviour of the homogeneous GLDE (\ref{sGLDE})
is dominated by the continuous part while the discrete one is harmless. Our last example inverses this behaviour.

\begin{example}\label{exGAS-sof} Consider a positive valued function $a:[0,+\infty)\to [0,+\infty)$ 
Lebesgue integrable on $[0,+\infty)$. Moreover, assume that
the sequence of impulses $\{\tau_{k}\}_{k\in\mathbb{N}}$ verifies the condition
\begin{itemize}
    \item[a)] There exists $c>0$ such that
    \begin{displaymath}
    \inf\limits_{i+1>j}\left\{\frac{i-j}{\tau_{i+1}-\tau_{j}}\right\}=c,
    \end{displaymath}
\end{itemize}
while the sequence $\{b_{k}\}_{k\in\mathbb{N}}$ verifies the condition
\begin{itemize}
    \item[b)] There exists $\theta,\eta\in (0,1)$ such that $\eta<|1+b_{k}|\leq  \theta$ for all $k\in \mathbb{N}$. 
\end{itemize}

In the case that $t>s$ with $s\in (\tau_{j},\tau_{j+1}]$ and $t\in (\tau_{i},\tau_{i+1}]$
with $i\geq j+1$. By using the asbolute integrability of $a(\cdot)$,
combined with properties a) and b), and defining $\alpha_{0}=-c\ln(\theta)>0$ 
we have that:
\begin{displaymath}
\begin{array}{rcl}
|U(t,s)| &\leq & K\, \theta^{i-j}\\
&\leq& K\, e^{\left[\frac{i-j}{\tau_{i+1}-\tau_{j}}\right](\tau_{i+1}-\tau_{j})\ln(\theta)}\\
&\leq&  K\, e^{-\alpha_{0}(\tau_{i+1}-\tau_{j})},\\
&\leq& K\, e^{-\alpha_{0}(t-s)}.
\end{array}
\end{displaymath}

In the case that $t>s$ with $t,s\in(\tau_{j},\tau_{j+1}]$, we choose a $s_0\in(\tau_{j+1},\tau_{j+2}]$ and decompose  $U(t,s)=U(t,s_0)U(s_0,s)$. Then, by using the previous estimations and condition (b), we obtain
\begin{equation*}
\begin{split}
|U(t,s)|&\leq Ke^{-\alpha_{0}(s_{0}-s)}|[U(s_{0},t)]^{-1}|\\
& \leq Ke^{-\alpha_{0}(t-s)}e^{\left|-\int_{s_0}^{s}a(\xi)d\xi\right|}\dfrac{1}{|1+b_{j+1}|}\\
&\leq K^{2}\eta^{-1}e^{-\alpha_{0}(t-s)}.
\end{split}
\end{equation*}
Summarising, there exist positive constants $N>0$ and $\alpha_{0}>0$ such that $|U(t,s)|< Ne^{-\alpha_{0}(t-s)}$, for all $t> s\geq0$. Therefore, by Theorem~\ref{UAS}, we deduce that the trivial solution of the homogeneous GLDE \eqref{sGLDE} is uniformly asymptotically stable. 

\end{example}

\section{Variational stability for generalized linear differential equations}

In this section, we recall the concepts of variational stability and stability with respect to perturbations for homogeneous GLDEs introduced in \cite{SCHWABIK2}. We will study the relation of these concepts with the notion of stability given in the previous Section 3. 

 Throughout this section, we assume that the function $A\in BV_{loc}([0,+\infty),\mathcal{L}(\rn))$ is conti\-nuous from the left and condition (D) holds. Since $A$ is left-continuous, we just  assume that $[I+\Delta^{+}A(t)]$ is invertible for all $t\geq0$ because the case when the matrix $[I-\Delta^{-}A(t)]$ is invertible evidently holds. 

We begin by introducing the notion of variational stability. 
\begin{definition}
The trivial solution $x\equiv 0$ of the homogeneous GLDE \eqref{LhGODE} is said to be

\begin{itemize}\label{defvs}
	\item {\bf Variationally stable}, if for every $\varepsilon>0$, there exists a $\delta=\delta(\varepsilon)>0$ such that if $y:[s_0,+\infty)\to\rn$, $s_0\geq0$, is a function of locally bounded variation on $[s_0,+\infty)$ and left-continuous on $(s_0,+\infty)$ with $\|y(s_0)\|<\delta$ and
	\[
	\sup_{t\geq s_0}\text{var}_{s_0}^{t}\left(y(s)-\int_{s_0}^{s}{\rm d}[A(\xi)]y(\xi)\right)<\delta,
	\]
   then $\|y(t)\|<\varepsilon$ for all $t\geq s_0$.
	\item {\bf Variationally attracting}, if there exists $\delta_0>0$ and for every $\varepsilon>0$ there is a $T=T(\varepsilon)\geq0$ and $\gamma=\gamma(\varepsilon)>0$ such that if $y:[s_0,+\infty)\to\rn$, $s_0\geq0$, is a function of locally bounded variation on $[s_0,+\infty)$ and left-continuous on $(s_0,+\infty)$ with $\|y(s_0)\|<\delta_0$ and
	\[
	\sup_{t\geq s_0}\text{var}_{s_0}^{t}\left(y(s)-\int_{s_0}^{s}{\rm d}[A(\xi)]y(\xi)\right)<\gamma,
	\]
	then $\|y(t)\|<\varepsilon$ for all $t\geq s_0 + T(\varepsilon)$ and $s_0\geq 0$.

	\item {\bf Variationally asymptotically stable}, if it is variationally stable and variationally attrac\-ting.
\end{itemize}
\end{definition}

We cite the following reflection about the variational stability notion stated in \cite[Remark~10.7]{SCHWABIK1}: ``This concept comes from the following intuitive idea: if a certain function $y(\cdot)$ given on some interval $[t_0,t_1]\subset [0,+\infty)$ is such that the initial value $y(t_0)$ is close to zero and on the interval $[t_0,t_1]$ the function $y$ is almost a solution of \eqref{GODE}, i.e. the variation of the function 
\[
y(s)-y(t_0)-\int_{t_0}^{s}DF(y(\tau),t)
\]
on $[t_0,t_1]$ is small enough, then $y$ is close to zero on the interval $[t_0,t_1]$".

\begin{remark}\label{r4.1}
	Note that in Definition~\ref{defvs} when it is considered the function $y(\cdot)$ as a solution of homogeneous GLDE \eqref{LhGODE}, then we recover the Definition~\ref{stability} of stability given in Section 3. In particular, the variational stability notion can be seen as the uniform stability, and the variational asymptotic stability notion can be seen as the uniform asymptotic stability. 
\end{remark}

In the next, we will define a concept of stability respect to perturbations which is closely related with the previous notion of variational stability. For our purpose, consider the following generalized perturbed equation
\begin{equation}\label{linearP}
	\dfrac{dx}{d\tau}=D[A(t)x+P(t)],
\end{equation}
where $P:[0,+\infty)\to\rn$ is a function. 

\begin{definition}
	The trivial solution $x\equiv 0$ of the homogeneous GLDE \eqref{LhGODE} is said to be
	
	\begin{itemize}
		\item {\bf Stable with respect to perturbations}, if for every $\varepsilon>0$, there exists a $\delta=\delta(\varepsilon)>0$ such that if $y_0\in\rn$ with $\|y_0\|<\delta$, and $P\in BV_{loc}([s_0,+\infty))$ is a left-continuous function on $(s_0,+\infty)$ such that
		\[
		\sup_{t\geq s_0}\text{var}_{s_0}^{t}(P)<\delta,
		\]
		then $\|y(t,s_0,y_0)\|<\varepsilon$ for all $t\geq s_0$, where $y(\cdot,s_0,y_0)$ is the unique solution of \eqref{linearP} with initial condition $y(s_0,s_0,y_0)=y_0$.
		\item {\bf Attractive with respect to perturbations}, if there exists $\delta_0>0$ and for every $\varepsilon>0$ there is a $T=T(\varepsilon)\geq0$ and $\gamma=\gamma(\varepsilon)>0$ such that if $y_0\in\rn$ with $\|y_0\|<\delta_0$, and $P\in BV_{loc}([s_0,+\infty))$ is a left-continuous function on $(s_0,+\infty)$ such that
		\[
		\sup_{t\geq s_0}\text{var}_{s_0}^{t}(P)<\gamma,
		\]
		then $\|y(t,s_0,y_0)\|<\varepsilon$ for all $t\geq s_0+T(\varepsilon)$, where $y(\cdot,s_0,y_0)$ is the unique solution of \eqref{linearP} with initial condition $y(s_0,s_0,y_0)=y_0$.
		\item {\bf Asymptotically stable  with respect to perturbations}, if it is stable with respect to perturbations and attrac\-tive with respect to perturbations.
	\end{itemize}
\end{definition}
We continue with the reflection of Schwabik stated in \cite[Remark~10.7]{SCHWABIK1}: ``The stability with respect to perturbations is motivated by the desire that the solutions of the perturbed equation \eqref{linearP} be close to zero on a certain $[t_0,t_1]$ whenever the value $y(t_0)$ is close to zero and the perturbing term $P$ in \eqref{linearP} is small in the sense that var$_{t_0}^{t_1}(P)$ is small".

\begin{remark}\label{r4.2}
	When the perturbation $P$ is identically null, we recover again the notion of stabi\-lity given in Section 3. Specifically, considering $P\equiv0$, the stability with respect to perturbations emulates the uniform stability, and the asymptotic stability with respect to perturbations emulates the uniform asymptotic stability from Definition~\ref{stability}.   
\end{remark}

The following theorem shows the equivalence between the notion of variational stability and stability with respect to perturbations. For a proof of the next result, we refer to the reader \cite[Theorem~10.8]{SCHWABIK}.
\begin{theorem}\label{evsp}
	The trivial solution of the homogeneous GLDE \eqref{LhGODE} is 
	\begin{itemize}
		\item Variationally stable if and only if it is stable with respect to perturbations.
		\item Variationally attracting if and only if it is attractive with respect to perturbations.
		\item Variationally asymptotically stable if and only if it is asymptotically stable with respect to perturbations. 
	\end{itemize}
\end{theorem}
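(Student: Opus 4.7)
The plan is to establish all three equivalences simultaneously via a direct bijective correspondence between the ``candidate functions'' $y$ appearing in the variational stability definitions and the pairs $(y_0,P)$ that determine a unique solution of the perturbed equation \eqref{linearP}. The key algebraic identity is that if $y\colon[s_0,+\infty)\to\rn$ is a solution of \eqref{linearP} with $y(s_0)=y_0$, then the definition of solution combined with the decomposition discussed after Example~\ref{ex0} yields
\[
y(t)-\int_{s_0}^{t}\mathrm{d}[A(\xi)]\,y(\xi)\;=\;y_0+P(t)-P(s_0),
\]
so that for every $t\geq s_0$,
\[
\mathrm{var}_{s_0}^{t}\!\left(y(s)-\int_{s_0}^{s}\mathrm{d}[A(\xi)]\,y(\xi)\right)\;=\;\mathrm{var}_{s_0}^{t}(P),
\]
together with $\|y(s_0)\|=\|y_0\|$. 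Conversely, given an arbitrary left-continuous $y\in BV_{loc}([s_0,+\infty),\rn)$, defining
\[
P(t):=y(t)-y(s_0)-\int_{s_0}^{t}\mathrm{d}[A(\xi)]\,y(\xi),\qquad P(s_0):=0,
\]
produces a left-continuous $P\in BV_{loc}([s_0,+\infty),\rn)$ whose variation on $[s_0,t]$ is precisely the defect variation of $y$, and $y$ becomes the unique solution of \eqref{linearP} for initial data $y_0:=y(s_0)$ and perturbation $P$.

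With this correspondence in hand, each equivalence reduces to transporting the constants from one definition to the other without any loss. For the first equivalence I would take the same $\delta(\varepsilon)$ in both directions: in the implication $(\Rightarrow)$, any $y_0$ and $P$ satisfying $\|y_0\|<\delta(\varepsilon)$ and $\sup_t \mathrm{var}_{s_0}^{t}(P)<\delta(\varepsilon)$ yield, via the solution $y$ of \eqref{linearP}, an admissible candidate for the variational stability test with the same $\delta$, hence $\|y(t)\|<\varepsilon$; in the implication $(\Leftarrow)$, any variational-stability candidate $y$ together with its associated $P$ satisfies the perturbation hypothesis with the same $\delta$, hence again $\|y(t)\|<\varepsilon$. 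For the second equivalence the constants $\delta_0$, $\gamma(\varepsilon)$, and $T(\varepsilon)$ transfer identically, and the third equivalence is just the conjunction of the first two.

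The main obstacle I expect is purely the regularity bookkeeping: in the $(\Leftarrow)$ direction one must verify that the constructed $P$ genuinely lies in $BV_{loc}$ and is left-continuous, which follows from Theorem~\ref{exist1} applied to the regulated function $y$ and the left-continuous integrator $A\in BV_{loc}$, together with the standard fact that an indefinite Perron--Stieltjes integral inherits left-continuity from its integrator; symmetrically, in the $(\Rightarrow)$ direction one needs the unique solution of \eqref{linearP} to be left-continuous whenever $P$ is, which follows from the existence--uniqueness theorem for GLDEs (the extension of Theorem~\ref{euhGLDE} that underlies \eqref{vcf1}) combined with property (e) of Theorem~\ref{pU}. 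Once this regularity ledger is settled, the three equivalences follow by direct substitution with no loss in constants, and no strictly variational machinery is needed beyond the identity highlighted above.
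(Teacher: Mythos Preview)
Your proposal is correct and is precisely the argument the paper invokes: the paper does not prove Theorem~\ref{evsp} itself but refers the reader to \cite[Theorem~10.8]{SCHWABIK1}, and Schwabik's proof there is exactly the bijective correspondence you describe---defining $P(t):=y(t)-y(s_0)-\int_{s_0}^{t}\mathrm{d}[A]\,y$ in one direction and taking the unique solution of \eqref{linearP} in the other, so that the defect variation of $y$ coincides with $\mathrm{var}_{s_0}^{t}(P)$ and the constants transfer without loss. The regularity bookkeeping you flag (left-continuity and $BV_{loc}$ of $P$ and of the solution $y$) is handled exactly as you outline, using the standing left-continuity assumption on $A$ in this section together with the one-sided limit formulas for the indefinite Perron--Stieltjes integral.
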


The following theorem is also due to Schwabik and characterise the variational stability with respect to the transition matrix $U$ given by \eqref{FO}. We will include the original proof for the reader's convenience, see \cite[p.405]{SCHWABIK2}.

\begin{theorem}\label{VS}
 Let $A\in BV_{loc}([0,+\infty),\mathcal{L}(\rn))$ be continuous from the left and condition {\rm (D)} holds. Then, the trivial solution of the homogeneous GLDE \eqref{LhGODE} is variationally stable if and only if the transition matrix $U(t,s_0)$ is bounded for all $t\geq s_0\geq0$.
\end{theorem}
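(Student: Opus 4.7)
The plan is to prove the two implications separately. The ($\Rightarrow$) direction will follow immediately by specialising Definition~\ref{defvs} to functions $y$ that are themselves solutions of the homogeneous GLDE~\eqref{LhGODE}, reducing variational stability to uniform stability so that Theorem~\ref{US} supplies the bound on $U$. For the ($\Leftarrow$) direction, I will identify each admissible $y$ with the solution of a perturbed GLDE whose forcing term is the defect $P := y - y(s_0) - \int_{s_0}^{\cdot} d[A(\xi)] y(\xi)$, and then bound $y$ via the variation of constants formula of Theorem~\ref{vcf} combined with the integration-by-parts form~\eqref{ibpf2} from Remark~\ref{MT-MF}.

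For the necessity, fix $x_0 \in \rn$ and $s_0 \geq 0$, and let $y(s) = U(s, s_0) x_0$ be the solution of~\eqref{LhGODE}. Then $y(s) - \int_{s_0}^s d[A(\xi)] y(\xi) = y(s_0)$ is constant on $[s_0, +\infty)$, so its variation on every subinterval is zero. Hence the variational stability hypothesis, applied to such $y$'s, coincides with the uniform stability of the trivial solution in the sense of Definition~\ref{stability}, and Theorem~\ref{US} yields $M > 0$ with $\|U(t, s_0)\| \leq M$ for all $t \geq s_0 \geq 0$.

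For the sufficiency, assume $\|U(t, s_0)\| \leq M$ for all $t \geq s_0 \geq 0$. Fix $\varepsilon > 0$ and set $\delta := \varepsilon/(6M)$. Let $y$ satisfy the hypotheses of variational stability for this $\delta$ and define $P(t) := y(t) - y(s_0) - \int_{s_0}^t d[A(\xi)] y(\xi)$; then $P(s_0) = 0$ and $\text{var}_{s_0}^t(P) < \delta$ for every $t \geq s_0$. The integral identity defining $P$ says exactly that $y$ is a solution of $\tfrac{dy}{d\tau} = D[A(t) y + P(t)]$ with prescribed initial value $y(s_0)$; by uniqueness (from $A \in BV_{loc}$ and condition (D)) and Theorem~\ref{vcf} in the form~\eqref{ibpf2} with $g = P$,
\[
y(t) = U(t, s_0) y(s_0) + \int_{s_0}^t U(t, s)\, d[P(s)] - \sum_{s_0 < \tau \leq t} \Delta^{-} U(t, \tau)\, \Delta^{-} P(\tau) + \sum_{s_0 \leq \tau < t} \Delta^{+} U(t, \tau)\, \Delta^{+} P(\tau).
\]
The leading term is bounded by $M\delta$. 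The Perron--Stieltjes integral is bounded by $\sup_{s \in [s_0,t]} \|U(t, s)\|\cdot \text{var}_{s_0}^t(P) \leq M\delta$, in the spirit of Theorem~\ref{exist1}. Each jump factor satisfies $\|\Delta^{\pm} U(t, \tau)\| \leq 2M$ by the triangle inequality, and each jump sum is bounded by $2M\,\text{var}_{s_0}^t(P) \leq 2M\delta$, since $\sum_\tau \|\Delta^{\pm} P(\tau)\| \leq \text{var}_{s_0}^t(P)$. Adding these contributions, $\|y(t)\| \leq M\delta + M\delta + 2M\delta + 2M\delta = 6M\delta = \varepsilon$ uniformly in $t \geq s_0$ and $s_0 \geq 0$, which is precisely variational stability.

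The main obstacle I anticipate is the Perron--Stieltjes estimate for $\int_{s_0}^t U(t, s)\, d[P(s)]$. Theorem~\ref{exist1} is stated with a BV integrator acting on a regulated integrand placed to the right of the differential, whereas in our decomposition $P$ plays the role of the integrator and the matrix $U(t, \cdot)$, which is regulated by Theorem~\ref{pU}, appears to its left. The uniform sup--times--variation bound survives this swap by a direct term-by-term estimate on Stieltjes sums $\sum U(t, \tau_j)[P(s_j) - P(s_{j-1})]$ using $\|U(t, \tau_j)\| \leq M$, but the accompanying one-sided jump bookkeeping and the identification of $y$ with the unique solution of the perturbed GLDE are where the argument's delicacy lies.
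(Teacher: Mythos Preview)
Your proof is correct and follows essentially the same route as the paper's: both directions hinge on the variation of constants formula~\eqref{ibpf2} together with the uniform bound on $U$, and your necessity argument (specialise to actual solutions, reduce to Theorem~\ref{US}) is exactly what the paper does. The only cosmetic differences in the sufficiency direction are that the paper first invokes Theorem~\ref{evsp} to recast the problem as stability with respect to perturbations (starting from a given $P$ and letting $y$ be the solution), whereas you start from $y$ and recover $P$ as its defect---these are equivalent by uniqueness---and that the paper uses the left--continuity of $A$ via Theorem~\ref{pU}(e) to make the $\Delta^{-}$ sum in~\eqref{ibpf2} vanish, yielding $\delta=\varepsilon/(4C+1)$ instead of your $\delta=\varepsilon/(6M)$.
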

\begin{proof}
Assume that there exists a constant $C>0$ such that $\|U(t,s_0)\|\leq C$, for all $t\geq s_0\geq 0$. Since Theorem~\ref{evsp}, we will prove that the trivial solution of \eqref{LhGODE} is stable with respect to perturbations. Consider a function $y(\cdot,s_0,y_0)$ which is a solution of the perturbed linear equation \eqref{linearP} with initial condition $y(s_0,s_0,y_0)=y_0$. 

From equation \eqref{ibpf2}, the left continuity of $A$, and Theorem~\ref{pU} item (e)-(ii), it follows that for every $t\geq s_0$ 
\begin{equation*}
\begin{split}
\|y(t,s_0,y_0)\|&=\left\|U(t,s_0)y_0 + \int_{s_0}^{t}U(t,s){\rm d}[P(s)-P(s_0)] + \sum_{s_0\leq \tau < t}\Delta^{+}U(t,\tau)\Delta^{+}P(\tau)\right\|\\
&\leq C\|y_0\| + C\text{var}_{s_0}^{t}(P)+2C\text{var}_{s_0}^{t}(P)\\
&\leq C\|y_0\|+3C\sup_{t\geq s_0}\text{var}_{s_0}^{t}(P).
\end{split}
\end{equation*}
Therefore, for a given $\varepsilon>0$, we can choose $\delta=\dfrac{\varepsilon}{4C+1}>0$, which satisfies
\[
\|y_0\|<\delta \; \text{ and } \sup_{t\geq s_0}\text{var}_{s_0}^{t}(P)<\delta \; \Longrightarrow \|y(t,s_0,y_0)\|<\varepsilon, \qquad \text{for all } t\geq s_0.
\]
Hence, the trivial solution of \eqref{LhGODE} is variationally stable. 

On the other hand, assume that \eqref{LhGODE} is variationally stable. For $\varepsilon=1$, there exists a $\delta>0$ such that if $x\colon[s_0,+\infty)\to\rn$ is a solution of \eqref{LhGODE}, i.e. $\text{var}_{s_0}^{t}(x(s)-\int_{s_0}^{s}{\rm d}[A(\xi)]x(\xi))=0$ for all $t\geq s_0$, with $\|x(s_0)\|<\delta$, then $\|x(t,s_0,x(s_0))\|<1$ for all $t\geq s_0$. 

Therefore, for an arbitrary $z\in\rn$ with $\|z\|\leq1$, we have $\|U(t,s_0)(\delta /2) z\|<1$ for all $t\geq s_0\geq0$, because $U(t,s_0)(\delta/2)z$ is a solution of \eqref{LhGODE} with initial condition $(\delta/2)z$. Hence, we obtain 
\[
\|U(t,s_0)\|=\sup_{\|z\|\leq1}\|U(t,s_0)z\|\leq 2/\delta,
\]
for all $t\geq s_0\geq0$, which implies the boundedness of the transition matrix.
\end{proof}
\begin{remark}
	If we think about the previous Remarks \ref{r4.1} and \ref{r4.2}, along with a careful reading of the results obtained in Theorem~\ref{US} and Theorem~\ref{UAS}, it seems reasonable to expect a possible characterisation of the variational asymptotic stability in terms of the transition matrix $U$. 
\end{remark}
The following theorem is the main result of this section. We provide a characterisation of the asymptotic variational stability and the global uniform exponential stability, see Definition~\ref{gstability}. 

\begin{theorem}\label{VAS}
	Let $A\in BV_{loc}([0,+\infty),\mathcal{L}(\rn))$ be continuous from the left and condition {\rm (D)} holds. Then, the trivial solution of the homogeneous GLDE \eqref{LhGODE} is variationally asymptotically stable if and only if there exist positive constants $\alpha,K>0$ such that \eqref{exp} holds for all $t\geq s_0\geq0$. 
\end{theorem}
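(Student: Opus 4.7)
My plan is to treat the two implications separately, exploiting the results established earlier in the paper as much as possible.

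For the easier direction ($\Rightarrow$), I would use Remark~\ref{r4.1}: by specialising the function $y$ in Definition~\ref{defvs} to be a solution of \eqref{LhGODE}, the expression $y(s)-\int_{s_0}^{s}{\rm d}[A(\xi)]y(\xi)$ is constantly equal to $y(s_0)$, so its variation vanishes. Hence the variational stability hypothesis reduces to uniform stability, and the variational attractivity hypothesis reduces to uniform attractivity. Together, variational asymptotic stability implies uniform asymptotic stability, and Theorem~\ref{UAS} provides the constants $K,\alpha>0$ for which \eqref{exp} holds.

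For the converse direction ($\Leftarrow$), assume \eqref{exp}. Variational stability is immediate from Theorem~\ref{VS} together with Theorem~\ref{evsp}, since \eqref{exp} in particular gives the uniform bound $\|U(t,s_0)\|\le K$. It then suffices to establish variational attractivity; via Theorem~\ref{evsp} I would work instead with the equivalent notion of attractivity with respect to perturbations. Fix $\delta_0:=1$ and consider the solution $y(\cdot,s_0,y_0)$ of the perturbed equation \eqref{linearP} with $\|y_0\|<\delta_0$. Left--continuity of $A$ forces $\Delta^{-}U(t,\tau)=0$ by Theorem~\ref{pU}(e)(ii), so formula \eqref{ibpf2} simplifies to
\[
y(t,s_0,y_0)=U(t,s_0)y_0+\int_{s_0}^{t}U(t,s)\,d[P(s)-P(s_0)]+\sum_{s_0\le\tau<t}\Delta^{+}U(t,\tau)\Delta^{+}P(\tau).
\]
Passing to the right limit in \eqref{exp} yields $\|U(t,\tau^{+})\|\le Ke^{-\alpha(t-\tau)}$, whence $\|\Delta^{+}U(t,\tau)\|\le 2Ke^{-\alpha(t-\tau)}$. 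The heart of the argument is then a splitting trick: for $t\geq s_0+T$, decompose the integration interval as $[s_0,s_0+T/2]\cup[s_0+T/2,t]$. On the first subinterval the kernel satisfies $e^{-\alpha(t-s)}\le e^{-\alpha T/2}$ while $\text{var}_{s_0}^{s_0+T/2}(P)\le\gamma$; on the second subinterval the kernel is bounded by $1$ but $\text{var}_{s_0+T/2}^{t}(P)\le\gamma$. Using a transposed form of the Stieltjes estimate from Theorem~\ref{exist1}, this produces a bound $K\gamma(1+e^{-\alpha T/2})\le 2K\gamma$ for the integral term, and an analogous argument (with an extra factor $2$) gives $4K\gamma$ for the jump sum. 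Combining everything,
\[
\|y(t,s_0,y_0)\|\le K\delta_0\,e^{-\alpha T}+6K\gamma\qquad\text{for all }t\ge s_0+T.
\]
Given $\varepsilon>0$, setting $\gamma(\varepsilon):=\varepsilon/(12K)$ and $T(\varepsilon):=\alpha^{-1}\ln(2K\delta_0/\varepsilon)$ yields $\|y(t,s_0,y_0)\|<\varepsilon$ for $t\ge s_0+T$, completing the verification of variational attractivity.

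The main obstacle is precisely this splitting step. The hypothesis provides only a \emph{global} bound $\sup_{t\ge s_0}\text{var}_{s_0}^{t}(P)\le\gamma$ rather than any tail decay, so a naive Stieltjes estimate produces only an $O(K\gamma)$ bound independent of $t$ and fails to deliver attractivity. The convolution structure of the perturbation integral is what saves the argument: the exponential weight $e^{-\alpha(t-s)}$ damps the "old" contributions, while on the "recent" tail one uses the smallness of $\gamma$ directly. A minor but necessary ingredient, which I would dispatch first, is the justification that $\|U(t,\tau^{+})\|$ inherits the exponential bound; this is immediate by right--continuity of $\|U(t,\cdot)\|$ away from jumps and by taking $s\downarrow\tau$ in \eqref{exp}.
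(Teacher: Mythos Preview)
Your proof is correct in both directions. The ($\Rightarrow$) implication matches the paper's: specialise $y$ to a genuine solution so the variation term vanishes, reduce variational asymptotic stability to uniform asymptotic stability, and invoke Theorem~\ref{UAS}.

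For ($\Leftarrow$) you take a more elaborate route than the paper. The paper bounds the perturbation contributions using only the \emph{uniform} estimate $\|U(t,s)\|\le K$, not the exponential decay, obtaining directly
\[
\|y(t,s_0,y_0)\|\le Ke^{-\alpha(t-s_0)}\|y_0\|+3K\sup_{t\ge s_0}\text{var}_{s_0}^{t}(P),
\]
and then sets $\gamma<\varepsilon/(3K)$ and $T(\varepsilon)=\alpha^{-1}\ln\bigl(K/(\varepsilon-3K\gamma)\bigr)$. Your assertion that the naive $O(K\gamma)$ estimate ``fails to deliver attractivity'' overlooks a key feature of the definition of attractivity with respect to perturbations: $\gamma=\gamma(\varepsilon)$ is allowed to depend on $\varepsilon$, so the $O(K\gamma)$ perturbation term can be driven below any prescribed tolerance without any splitting. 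The exponential decay is needed only to kill the homogeneous piece $U(t,s_0)y_0$. Your splitting trick is valid but superfluous here; it would become genuinely necessary only if $\gamma$ were fixed independently of $\varepsilon$, or if one wanted the whole bound on $\|y(t,s_0,y_0)\|$ to decay in $t$ rather than merely to fall below $\varepsilon$ eventually.
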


\begin{proof}
Assume that there exist positive constants $\alpha,K>0$ such that \eqref{exp} holds for all $t\geq s_0\geq0$. Clearly, the transition matrix is bounded, with $\|U(t,s_0)\|\leq K$, for all $t\geq s_0\geq0$, and thus the trivial solution of \eqref{LhGODE} is variationally stable. 

Since Theorem~\ref{evsp}, we will prove that the trivial solution of  \eqref{LhGODE} is attractive with respect to perturbations. Let $\varepsilon>0$ be fixed, assume that $K>\varepsilon$, and let $y_0\in\rn$. Consider a function $y(\cdot,s_0,y_0)$, which is a solution of the perturbed linear equation \eqref{linearP}  with initial condition $y(s_0,s_0,y_0)=y_0$, where $P\in BV_{loc}([s_0,+\infty),\rn)$ is a left-continuous function on $(s_0,+\infty)$. 

For every $t\geq s_0$ we have the estimate
\begin{equation*}
\begin{split}
\|y(t,s_0,y_0)\|&=\left\|U(t,s_0)y_0 + \int_{s_0}^{t}U(t,s){\rm d}[P(s)-P(s_0)] + \sum_{s_0\leq \tau < t}\Delta^{+}U(t,\tau)\Delta^{+}P(\tau)\right\|\\
&\leq Ke^{-\alpha(t-s_0)}\|y_0\| + 3K\text{var}_{s_0}^{t}(P)\\.
\end{split}
\end{equation*}
Let us consider $\delta_{0}=1$, $0<\gamma<\varepsilon/3K$, and $T(\varepsilon)=[\ln(K/(\varepsilon-3K\gamma))]/\alpha$. Therefore, it follows that for $\|y_0\|<\delta_0$, and $\displaystyle\sup_{t\geq s_0}\text{var}_{s_0}^{t}(P)<\gamma$ we get
\begin{equation*}
\|y(t,s_{0},y_0)\|\leq Ke^{-\alpha(t-s_{0})}+3K\gamma<\varepsilon, \qquad \text{ for all } t\geq s_0+T(\varepsilon).
\end{equation*} 
Hence, we conclude that the trivial solution of \eqref{LhGODE} is attractive with respect to perturbations.

On the other hand, assume that the trivial solution of \eqref{LhGODE} is variationally asymptotically stable. Let $0<\varepsilon<1$ be fixed. From the variational attracting assumption, there exists a $\delta_0>0$ independent of $\varepsilon$, and there is a $T=T(\varepsilon)>0$ such that if $x:[s_0,+\infty)\to\rn$ is a solution of \eqref{LhGODE}, i.e. $\text{var}_{s_0}^{t}(x(s)-\int_{s_0}^{s}{\rm d}[A(\xi)]x(\xi))=0$ for all $t\geq s_0$, with $\|x(s_0)\|<\delta_0$ then 
\[
\|x(t,s_0,x(s_0))\|=\|U(t,s_0)x(s_0)\|<\dfrac{\varepsilon\delta_0}{2}, \qquad \text{ for all } t\geq s_0+T(\varepsilon). 
\]
Hence, for an arbitrary $z\in\rn$ with $\|z\|\leq1$ we obtain $\|U(t,s_0)(\delta_{0}/2)z\|<\varepsilon\delta_{0}/2$ for all $t\geq s_{0} + T$, which in turn implies that $\|U(t,s_0)z\|<\varepsilon$ for all $t\geq s_{0} + T$. And we get 
\begin{equation*}
\|U(t,s_0)\|=\sup_{\|z\|\leq1}\|U(t,s_0)z\|<\varepsilon, \qquad \text{ for all } t\geq s_{0} + T. 
\end{equation*}
 Now, from the variational stability assumption of \eqref{LhGODE} and Theorem~\ref{VS}, we deduce the existence of a positive constant $C>0$ such that $\|U(t,s_0)\|\leq C$ for all $t\geq s_0\geq0$, and the rest of the proof follows exactly analogue to the final part of the proof given in Theorem~\ref{UAS}. 
\end{proof}

\begin{remark}
	The notion of uniform stability of the trivial solution of the homogenous GLDE \eqref{LhGODE} given in Section 3 is equivalent to the notion of variational stability, both concepts are comparable to the uniform boundedness of the transition matrix $U$ defined by \eqref{FO}, namely, a bound independent of the initial time $s_0\geq0$. In addition, the uniform asymptotic stability notion from Section~3  is equivalent to the variational asymptotic stability of the trivial solution of the homogeneous GLDE \eqref{LhGODE}, and as we seen in Theorem~\ref{UAS} and Theorem~\ref{VAS}, both of these concepts are equivalent to the uniform exponential decay of the transition matrix described by \eqref{exp}, and consequently coincide with the global uniform exponential stability notion given in Definition~\ref{gstability}. 
\end{remark}

\end{document}